\newtheorem{theorem}{Theorem}[section]
\newtheorem{lemma}[theorem]{Lemma}
\newtheorem{corollary}[theorem]{Corollary}
\newtheorem{conjecture}[theorem]{Conjecture}
\newtheorem{question}[theorem]{Question}
\theoremstyle{remark}
\theoremstyle{definition}
\newtheorem{definition}[theorem]{Definition}
\DeclareMathOperator{\characteristic}{char}
\DeclareMathOperator{\perm}{perm}
\DeclareMathOperator{\initial}{in}
\numberwithin{equation}{section}
\begin{document}

\title{The $F$-singularities of algebras defined by permanents}

\author{Trung Chau}
\address{Chennai Mathematical Institute, Siruseri, Tamil Nadu, India}
\email{chauchitrung1996@gmail.com}

\keywords{permanent, permanental ideal, $F$-purity, $F$-regularity, generic matrix, symmetric matrix, Hankel matrix, determinantal ideal}

\subjclass[2020]{13A35; 14G17; 14M99; 13C40; 15A15}

\begin{abstract}
    Let $X$ be a matrix of indeterminates, $t$ an integer, and $P_t(X)$ define the ideal generated by the permanents of all $t\times t$ submatrix of $X$. $P_t(X)$ is called a permanental ideal. In this article, we study the algebras $\Bbbk[X]/P_t(X)$ where $X$ is a generic, symmetric, or a Hankel matrix of indeterminates. When $\characteristic \Bbbk = 2$, $P_t(X)$ is also known as a determinantal ideal, a popular class in commutative algebra and algebraic geometry, and thus many properties of $P_t(X)$ are known in this case. We prove that, if $X$ is an $n\times n$ matrix and $\characteristic \Bbbk>2$, the algebra $\Bbbk[X]/P_n(X)$ is $F$-regular, just like when $\characteristic \Bbbk = 2$. On the other hand, we obtain a full characterization of when $\Bbbk[X]/P_2(X)$ is $F$-pure or $F$-regular, when $\characteristic \Bbbk >2$, and the answer is different than that in even characteristic. 
\end{abstract}
\maketitle

\section{Introduction}

Let $X=(x_{ij})_{1\leq i,j\leq n}$ be an $n\times n$ matrix for some integer $n$. The \emph{permanent} and \emph{determinant} of $X$, denoted by $\perm(X)$ and $\operatorname{det}(X)$, are defined to be the polynomials
\[
\sum_{\sigma} x_{1,\sigma(1)}\cdots x_{n,\sigma(n)} \text{ and }\sum_{\sigma} (-1)^{\operatorname{sgn}(\sigma)}x_{1,\sigma(1)}\cdots x_{n,\sigma(n)} ,
\]
respectively, where  the summation ranges over all the permutations of the set $\{1,\dots, n\}$. Permanents and determinants are among the most important polynomials in mathematics. While determinants need no introduction, permanents, since their invention by Cauchy and Binet in the early 1800s, have found applications across mathematics and physics, e.g., counting problems regarding matchings in Graph Theory \cite{PermanentProblem} or the study of boson Green's function in Quantum Field Theory \cite{Aaronson:14}, to name a few. We refer to \cite{Minc} for an in-depth survey on the subject. Perhaps  it is the work of Valiant \cite{VALIANT1979189} that gives the most compelling reason why permanents are important: their computation is an NP-hard problem. Thus an algorithm that computes permanents in polynomial time would imply $P=NP$, solving the arguably most well-known question in theoretical computer~science. On the other hand, determinants are known to be computable in polynomial time using Gaussian elimination. This marks one of the most important difference in the study of permanents, despite their structural similarity to determinants.

Let $\Bbbk$ be a field and $X$ be a matrix of indeterminates. For each integer $t$, we define the \emph{$t\times t$ permanental ideal} (respectively, \emph{determinantal ideal}) of $\Bbbk[X]$, denoted by $P_t(X)$ (respectively, $I_t(X)$), to be the one generated by all the permanents (respectively, determinants) of all the $t\times t$ submatrices of $X$. Determinantal ideals are among the most popular objects in Commutative Algebra and Algebraic Geometry, as they appear naturally in the study of these subjects, e.g., they define invariant rings of linearly reductive group actions on polynomial rings, or the Grassmanian varieties. Three cases of special interests are when $X$ is \emph{generic}, \emph{symmetric} ($x_{ij}=x_{ji}$ for any $i,j$), and \emph{Hankel} ($x_{ij}=x_{i+k,\ j-k}$ for any $i\leq j$ and any $k\in [0,j-i]$). Determinantal ideals in these cases define algebras that have rational singularities, and hence are Cohen-Macaulay normal domains, in characteristic $0$  \cite{Bou87, CONCA2018111, Eagon1971InvariantTA, Kutz74}.  In prime characteristic $p>0$, Hochster and Huneke \cite{HH-BrianconSkoda} initiated the study of $F$-singularities which has become an active area since, and determinantal ideals of generic and symmetric matrices define algebras that are among the first examples of $F$-regular rings, the closest singularities to being regular in the theory. It is conjectured in \cite{CONCA2018111} that the same holds for Hankel matrices, and still remains open. We refer to \cite{BCRV, BV} for some survey on determinantal ideals.

Permanental ideals, on the algebraic side, have remarkably different properties than their determinantal counterparts. It is important to note that in characteristic $2$, the two concepts coincide, and thus permanental ideals share the properties with determinantal ideals. The story becomes complicated in other characteristics. For example, while minimal primes of $P_2(X)$ are known in the cases where $X$ is generic \cite{LS00}, symmetric \cite{Trung-symmetric}, and Hankel \cite{GGS07}, a complete list of those of $P_3(X)$ remains elusive in the case where $X$ is generic, with a partial answer given by Kirkup \cite{Kirkup}. The algebras $\Bbbk[X]/P_2(X)$ are rarely reduced and Cohen-Macaulay, in all three cases for the matrix $X$, a noticeable deviation from the algebras defined by determinants. There are only few studies on $P_t(X)$ for any $t$ (see, e.g., \cite{BCMV25,ELSW18}). 

In this article, we assume $\characteristic k = p>2$, and study the $F$-singularities of the permanental hypersurface $\Bbbk[X]/\big(\perm(X)\big)$ when $X$ is a square generic/symmetric/Hankel matrices, and $\Bbbk[X]/\sqrt{P_2(X)}$ when $X$ is a generic or symmetric matrix of any size. Note that an $F$-pure ring is reduced, and $\Bbbk[X]/P_2(X)$ is rarely reduced, hence rarely $F$-pure. It is however, somewhat surprising that its reduction $\Bbbk[X]/\sqrt{P_2(X)}$ always satisfies the property. We refer to Section~\ref{sec:prem} for unexplained concepts. We obtain a full characterization of $F$-purity and $F$-regularity for these rings. Our main theorems are as follows.

\begin{theorem}[{Theorems~\ref{thm:square-Hankel-F-regular} and \ref{thm:square-generic-symmetric-F-regular}}]
    Let $\Bbbk$ be a field of positive characteristic $p>2$ and $X$ a square generic/symmetric/Hankel matrix of indeterminates. Then the ring $\Bbbk[X]/\big(\perm(X)\big)$ is $F$-regular.
\end{theorem}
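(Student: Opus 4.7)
The plan is to reduce $F$-regularity to $F$-purity and then verify $F$-purity directly via Fedder's criterion. Write $S := \Bbbk[X]$ and $R := S/(\perm(X))$, regarded as standard graded $\Bbbk$-algebras.

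Since $R$ is a homogeneous hypersurface it is Gorenstein, with canonical module $\omega_R \cong R(n - N)$, where $N$ counts the variables of $S$: $N = n^2$ in the generic case, $N = n(n+1)/2$ in the symmetric case, and $N = 2n - 1$ in the Hankel case. For $n \geq 2$ this gives $a(R) = n - N < 0$ in all three settings (the case $n = 1$ is trivial since $R = \Bbbk$). The first move is to invoke the standard principle that a graded Gorenstein $\Bbbk$-algebra with negative $a$-invariant is $F$-regular as soon as it is $F$-pure; this reduces the theorem to establishing $F$-purity of $R$.

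For $F$-purity, I would apply Fedder's criterion: $R$ is $F$-pure iff $\perm(X)^{p-1} \notin \mathfrak{m}^{[p]}$ in $S$, where $\mathfrak{m}$ is the irrelevant ideal. In each case I would exhibit an explicit monomial $M \notin \mathfrak{m}^{[p]}$ whose coefficient in $\perm(X)^{p-1}$ is nonzero in $\Bbbk$. For $X$ generic or symmetric, take the diagonal monomial $M = x_{11}^{p-1} x_{22}^{p-1} \cdots x_{nn}^{p-1}$. Expanding $\perm(X)^{p-1} = \big(\sum_\sigma \prod_i x_{i,\sigma(i)}\big)^{p-1}$ as a sum over $(p-1)$-tuples of permutations of $\{1, \dots, n\}$, the only tuple producing $M$ is the all-identity one (any non-identity $\sigma$ contributes some off-diagonal $x_{ij}$ with $i \neq j$), so the coefficient of $M$ is $1$. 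For $X$ Hankel, writing $x_{ij} = y_{i+j-1}$, I would take $M = y_1^{p-1} y_3^{p-1} \cdots y_{2n-1}^{p-1}$.

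Establishing the coefficient of $M$ in the Hankel case is the main technical obstacle, because of the variable collisions $x_{ij} = x_{i'j'}$ whenever $i+j = i'+j'$. A permutation $\sigma$ contributes $\prod_i y_{i + \sigma(i) - 1}$ to $\perm(X)$, so only parity-preserving $\sigma$ (sending odd indices to odd and even to even) contribute monomials in odd-indexed $y_\ell$. Encoding each such $\sigma$ by its profile $v_\sigma \in \mathbb{Z}_{\geq 0}^n$ with $(v_\sigma)_j = \#\{i : i + \sigma(i) = 2j\}$, producing $M$ from $\perm(X)^{p-1}$ amounts to writing $(p-1, \dots, p-1) = \sum_{k=1}^{p-1} v_{\sigma_k}$. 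Pairing both sides with the linear functional $v \mapsto \sum_j j^2 v_j$ and computing $\sum_j j^2 (v_\sigma)_j = \tfrac{1}{2}\big(\sum_i i^2 + \sum_i i\,\sigma(i)\big)$, the rearrangement inequality $\sum_i i\,\sigma(i) \leq \sum_i i^2$, with equality iff $\sigma = \mathrm{id}$, forces every $\sigma_k$ to equal the identity. Hence the coefficient of $M$ is $1$, giving $F$-purity and, via the $a$-invariant reduction above, $F$-regularity.
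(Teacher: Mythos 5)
Your Fedder computation for $F$-purity is correct in all three cases, and the Hankel argument via the quadratic weight functional $v \mapsto \sum_j j^2 v_j$ and the rearrangement inequality is a genuinely different (and pleasant) route to the fact that the coefficient of $\prod_i y_{2i-1}^{p-1}$ in $\perm(Z_n)^{p-1}$ is $1$; the paper gets the same conclusion more cheaply by noting that $\prod_i z_{2i-1}$ is the leading term of $\perm(Z_n)$ under a diagonal monomial order, so its $(p-1)$st power has leading coefficient $1$. The generic/symmetric Fedder check is likewise fine.

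The gap is in the very first step. The principle you invoke --- ``a graded Gorenstein $\Bbbk$-algebra with negative $a$-invariant is $F$-regular as soon as it is $F$-pure'' --- is not a theorem. A counterexample: $R = \Bbbk[x,y,z,w]/(x^2y + z^2w)$ with $p>2$. Here $R$ is a hypersurface (so Gorenstein), $x^2y + z^2w$ is irreducible (it is primitive and linear in $y$), so $R$ is a domain, and $a(R) = 3 - 4 = -1 < 0$. The term $\binom{p-1}{(p-1)/2}\,x^{p-1}y^{(p-1)/2}z^{p-1}w^{(p-1)/2}$ appears in $(x^2y+z^2w)^{p-1}$ with nonzero coefficient mod $p$ and lies outside $\mathfrak{m}^{[p]}$, so $R$ is $F$-pure by Fedder. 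But the Jacobian ideal has radical $(x,z)$, so the singular locus has codimension one in $\Spec R$; by Serre's criterion $R$ is not even normal, hence not $F$-regular. The result you seem to have in mind (Fedder--Watanabe) needs $R$ to be $F$-rational on the punctured spectrum, which holds automatically for isolated singularities but not in general --- and the permanental hypersurface for $n \geq 3$ does not have an isolated singularity (the Jacobian ideal is generated by $(n-1)\times(n-1)$ subpermanents, whose zero locus has positive dimension). So this hypothesis would itself need to be established, and you give no argument for it.

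The paper instead proves $F$-regularity directly: it first shows $\Bbbk[Z]/P_n(Z_n)$ is a domain via Eisenstein's criterion, checks $F$-purity via the diagonal leading term, identifies the subpermanent $f_{n-1}$ as a test element (it lies in the Jacobian ideal and $R$ is an $F$-pure domain), and then verifies Glassbrenner's criterion by an explicit inductive computation showing that a suitable monomial multiple of $f_{n-1}f_n^{p-1}$ equals $\pm\bigl(\prod_i z_i\bigr)^{p-1}$ modulo $(z_1^p,\dots,z_{2n-1}^p)$; the generic and symmetric cases then follow by Gorenstein deformation from the Hankel one. To repair your argument you would need either to supply a proof that the punctured spectrum of the permanental hypersurface is $F$-rational (which is not obvious and is essentially of the same difficulty as the theorem itself), or to carry out a Glassbrenner-type computation as in the paper.
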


\begin{theorem}[{Theorem~\ref{thm:generic-Fpure}}]
    Let $\Bbbk$ be a field of positive characteristic $p>2$ and $X$ an $m\times n$ generic  matrix of indeterminates where $m,n\geq 2$. Then
	\begin{enumerate}
		\item the ring $\Bbbk[X]/\sqrt{P_2(X)}$ is $F$-pure;
		\item the ring $\Bbbk[X]/\sqrt{P_2(X)}$ is $F$-regular if and only if $m=n=2$.
	\end{enumerate}
\end{theorem}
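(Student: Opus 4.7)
The plan proceeds in three parts, one for each conclusion.

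For part (1), my starting point is the explicit description of the minimal primes of $P_2(X)$ for a generic matrix due to Laubenbacher--Swanson \cite{LS00}: each minimal prime is of the form
\[
Q_\alpha = (x_{ij} : (i,j)\in S_\alpha) + P_2(X'_\alpha),
\]
where $X'_\alpha$ is a submatrix of $X$ of size at most $2\times 2$, so that $R/Q_\alpha$ is either a polynomial ring or the $2\times 2$ permanental hypersurface (already shown to be $F$-regular by the preceding theorem of the paper applied at $n=2$). To promote this to $F$-purity of $R/\sqrt{P_2(X)} = R/\bigcap_\alpha Q_\alpha$, I would apply Fedder's criterion, exhibiting an explicit element of $\bigl(\sqrt{P_2(X)}\bigr)^{[p]} : \sqrt{P_2(X)}$ outside $\mathfrak{m}^{[p]}$ as a product, over $\alpha$, of Fedder witnesses for the individual $Q_\alpha$. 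An alternative and arguably cleaner route is to display a term order for which the initial ideal $\initial(\sqrt{P_2(X)})$ is squarefree; the associated Stanley--Reisner ring is then automatically $F$-pure, and $F$-purity descends to $\sqrt{P_2(X)}$ by the standard Gr\"obner deformation principle.

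For the sufficiency direction of part (2), when $m=n=2$ we have $P_2(X) = (x_{11}x_{22} + x_{12}x_{21})$, and the substitution $x_{22}\mapsto -x_{22}$ (valid since $p>2$) identifies this ideal, up to sign, with the $2\times 2$ determinant $x_{11}x_{22} - x_{12}x_{21}$. Hence $P_2(X)$ is prime, equals $\sqrt{P_2(X)}$, and the quotient is isomorphic to the classical $2\times 2$ generic determinantal ring, which is $F$-regular (equivalently, invoke the first theorem of the paper with $n=2$).

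For the necessity direction of part (2), I would show that for every $(m,n)\neq (2,2)$ with $m,n\geq 2$, the ideal $\sqrt{P_2(X)}$ has at least two minimal primes; consequently $R/\sqrt{P_2(X)}$ is not a domain and cannot be $F$-regular, since strong $F$-regularity of a connected graded ring forces it to be a normal domain. Concretely, if $n\geq 3$ then the ``keep-row-$i_0$'' ideals $Q_{i_0} := (x_{ij} : i\neq i_0)$, for $i_0\in[m]$, are pairwise distinct variable primes, each contains $P_2(X)$ (the quotient retains only a single row, so no $2\times 2$ submatrix survives), and each is minimal over $P_2(X)$: in $R_{Q_{i_0}}$, for each killed row $i$ the permanents pairing rows $i$ and $i_0$ produce a $\binom{n}{2}\times n$ linear system in $x_{i1},\ldots,x_{in}$ whose $3\times 3$ subsystem (on any three columns $j<k<l$) has determinant $-2 u_j u_k u_l$, a unit in characteristic $p>2$, so every $x_{ij}$ lies in $P_2 R_{Q_{i_0}}$. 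Varying $i_0$ gives $m\geq 2$ distinct minimal primes. The remaining case $m\geq 3,\, n=2$ is entirely symmetric, using ``keep-column'' ideals instead.

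The main obstacle is the $F$-purity claim in part (1): the minimal primes of $P_2(X)$ come in structurally different flavors (pure variable ideals versus variable ideals augmented by a $2\times 2$ permanent) and their number grows with $m$ and $n$, so combining the individual Fedder witnesses into a global one requires careful combinatorial bookkeeping. The most elegant resolution would be the Gr\"obner-deformation path, which hinges on finding a term order under which $\initial(\sqrt{P_2(X)})$ is squarefree.
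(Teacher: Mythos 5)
Your handling of part (2) is correct and consonant with the paper: in the square $2\times 2$ case the statement reduces to the square-hypersurface theorem, and for $(m,n)\neq(2,2)$ the existence of at least two minimal primes over $P_2(X)$ rules out $F$-regularity because a strongly $F$-regular $\mathbb{N}$-graded ring is a normal domain. Your verification that the row/column variable ideals are minimal over $P_2(X)$ when $p>2$ (via the $3\times 3$ system with unit determinant $-2u_ju_ku_l$) is a nice explicit check of what the paper imports from Laubenbacher--Swanson.

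The gap is in part (1), which you flag yourself, and neither of your two proposed routes closes it. Route A, a \emph{product} of Fedder witnesses over the minimal primes, cannot work for degree reasons: in $\Bbbk[X]$ with $N=mn$ variables, any polynomial of degree exceeding $N(p-1)$ automatically lies in $\mathfrak{m}^{[p]}$, and the Fedder witness for a single minimal prime $P$ (which is generated by a regular sequence) already has degree $(p-1)\cdot\operatorname{ht}P$, so multiplying witnesses over several minimal primes pushes the degree far past $N(p-1)$ as soon as $(m,n)\neq(2,2)$. Route B (Gr\"obner degeneration to a squarefree initial ideal) is left as an aspiration; you do not exhibit a term order, and it is unclear one exists given that $\sqrt{P_2(X)}$ is generated by a mixture of permanental quadrics and the degree-three monomials from Lemma~\ref{lem:three-element-in-permanent}.

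What the paper actually does---and what is missing from your proposal---is to build the Fedder element as a \emph{sum}, not a product, using the containment $\bigcap_P(P^{[p]}:P)\subseteq\bigl(\bigcap_P P\bigr)^{[p]}:\bigl(\bigcap_P P\bigr)$ from \cite{PTW23}. Concretely, one sets
\[
f=\prod_{i,j}x_{ij}^{p-1}+\sum_{\omega}f_\omega,
\]
the sum over $2\times2$ submatrices $\omega$ of $X$, with each correction $f_\omega$ engineered so that: every monomial of $f_\omega$ lies in $\mathfrak{m}^{[p]}$, so $f\equiv\prod x_{ij}^{p-1}\not\equiv 0$ modulo $\mathfrak{m}^{[p]}$; for the minimal prime $P$ attached to $\omega$, the combination $\prod x_{ij}^{p-1}+f_\omega$ collapses, using $\binom{p-1}{k}\equiv(-1)^k\pmod p$, into the exact witness $\bigl(\prod_{x_{ij}\notin\omega}x_{ij}^{p-1}\bigr)\perm(\omega)^{p-1}$ up to a factor in $P^{[p]}$; and for every $\omega'\neq\omega$ the term $f_{\omega'}$ falls into $P^{[p]}$ directly (each of its monomials contains $a_{\omega'}^p$ or $d_{\omega'}^p$). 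For the row/column variable primes, the check is that every $f_\omega$ already lies in $P^{[p]}$. This is precisely the combinatorial bookkeeping you defer, and it is the nontrivial content of part (1).
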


\begin{theorem}[{Theorem~\ref{thm:symmetric-Fpure}}]
    Let $\Bbbk$ be a field of positive characteristic $p>2$ and $Y$ an  $n\times n$  symmetric matrix of indeterminates where $n\geq 2$. Then
	\begin{enumerate}
		\item the ring $\Bbbk[Y]/\sqrt{P_2(Y)}$ is $F$-pure;
		\item the ring $\Bbbk[Y]/\sqrt{P_2(Y)}$ is $F$-regular if and only if $n=2$.
	\end{enumerate}
\end{theorem}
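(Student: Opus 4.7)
The plan is to mirror, and then refine, the argument used for the generic case in Theorem~\ref{thm:generic-Fpure}, modifying it to accommodate the extra identifications imposed by symmetry. First I would recall the minimal prime decomposition of $P_2(Y)$ for a symmetric matrix $Y$, available in the literature cited earlier in the paper, and from it extract a usable generating set and an explicit intersection $I := \sqrt{P_2(Y)} = \bigcap_j Q_j$ of $I$ as a finite intersection of primes.

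For part (1), I would apply Fedder's criterion: the quotient $R = \Bbbk[Y]/I$ is $F$-pure if and only if there exists an element $g \in (I^{[p]} : I) \setminus \mathfrak{m}^{[p]}$, where $\mathfrak{m}$ is the irrelevant maximal ideal of $\Bbbk[Y]$. I would construct such a $g$ as the $(p-1)$-st power $g_0^{p-1}$ of a carefully chosen polynomial $g_0$ built from the generators of the minimal primes $Q_j$. The calculation should become tractable because each $Q_j$ is expected to contain a large linear part together with a single symmetric permanent, so that verifying the colon condition reduces to a check on one generator of $I$ at a time. I expect this to be the most technical step, and the place where the argument deviates most from the generic case, precisely because of the identifications $y_{ij}=y_{ji}$, which break the row/column independence that the generic proof exploits; getting this splitting element right is the main obstacle of the proof.

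For part (2), I handle $n=2$ and $n\geq 3$ separately. When $n=2$, $I$ is generated by the single polynomial $y_{11}y_{22}+y_{12}^2$, which is irreducible because $\operatorname{char}\Bbbk\neq 2$, so $R$ is a non-degenerate rank-$3$ quadric cone; this is a standard example of an $F$-regular hypersurface in characteristic different from $2$ (after extending scalars it is isomorphic to the second Veronese of a polynomial ring in two variables). When $n\geq 3$, the decomposition from the first step should exhibit at least two distinct minimal primes of $I$, so $R$ is not a domain and hence cannot be $F$-regular. All that remains once the minimal prime decomposition is in hand is to verify the colon condition for the proposed splitting element, which I expect to be a concrete but manageable calculation on the explicit generators.
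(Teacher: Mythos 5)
Your high-level plan is sound and broadly matches the paper: recall the minimal primes of $P_2(Y)$ from the symmetric analogue of Laubenbacher--Swanson (this is \cite[Theorem 4.1]{Trung-symmetric}, stated as Lemma~\ref{lem:symmetric-minimal-primes}), use Lemma~\ref{lem:intersection} together with Fedder's criterion to reduce $F$-purity to producing one element $f\in\bigcap_{u<v}\bigl(P_{uv}^{[p]}:P_{uv}\bigr)\setminus\mathfrak m^{[p]}$, and separate part (2) into the $n=2$ hypersurface case and the $n\geq 3$ non-domain case. Your Veronese argument for $n=2$ is a valid alternative to the paper's route (which deduces $F$-regularity from the square Hankel case by Gorenstein deformation); both are fine.

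There are, however, two concrete problems with the way you propose to carry out part (1). First, the form $g=g_0^{p-1}$ is the wrong shape for the Fedder element here. Write $q_{uv}=y_{uu}y_{vv}+y_{uv}^2$ and $\omega_{uv}=\{y_{uu},y_{uv},y_{vv}\}$, so that $P_{uv}^{[p]}:P_{uv}=\bigl(q_{uv}^{p-1}\prod_{y\notin\omega_{uv}}y^{p-1}\bigr)+P_{uv}^{[p]}$. The only $(p-1)$-st power of the right degree that survives modulo $\mathfrak m^{[p]}$ is (a scalar multiple of) $\prod_{i\leq j}y_{ij}^{p-1}$, and this is \emph{not} in $P_{uv}^{[p]}:P_{uv}$: since the exponents on the variables outside $\omega_{uv}$ are exactly $p-1$, membership would force $y_{uu}^{p-1}y_{uv}^{p-1}y_{vv}^{p-1}\in(q_{uv}^{p-1})$, which fails because $q_{uv}$ is an irreducible quadric not dividing $y_{uu}y_{uv}y_{vv}$. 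The element the paper actually uses is an additive combination
\[
f=(-1)^{(p-1)/2}\prod_{i\leq j}y_{ij}^{p-1}+\sum_{i<j}f_{ij},
\]
where each correction term $f_{ij}$ is built so that modulo $P_{uv}^{[p]}$ the terms with $(i,j)\neq(u,v)$ vanish, while $g+f_{uv}$ reassembles into $(y_{uu}y_{vv})^{(p-1)/2}\bigl(\prod_{y\notin\omega_{uv}}y^{p-1}\bigr)q_{uv}^{p-1}$ via the binomial identity $\binom{p-1}{k}\equiv(-1)^k\pmod p$. A single $(p-1)$-st power cannot be tuned to simultaneously produce the right residue modulo each $P_{uv}^{[p]}$, so you would need to abandon that ansatz and construct the additive $f$ directly.

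Second, your expectation that the symmetric case is the harder one, because $y_{ij}=y_{ji}$ breaks row/column independence, is backwards. The identifications actually simplify the decomposition: for a symmetric matrix the minimal primes are \emph{all} of a single type (one $2\times 2$ diagonal permanent plus all remaining entries), whereas the generic case additionally has the ``all entries in $m-1$ rows'' and ``all entries in $n-1$ columns'' primes that require separate treatment. So the symmetric computation involves fewer cases, and the Fedder element is structurally the same as in the generic argument with a small modification to account for the repeated variable $y_{uv}$ in $q_{uv}=y_{uu}y_{vv}+y_{uv}^2$.
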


In this article we will often use $X$ a generic matrix, $Y$ a symmetric matrix, and $Z$ a Hankel matrix. We remark that in the case where $Z$ is a Hankel matrix, it is known that except for the hypersurface case, the ideal $\sqrt{P_2(Z)}$ is a squarefree monomial ideal  in odd characteristics \cite{GGS07}, and thus defines a non-$F$-regular $F$-pure ring (see, e.g., \cite[Exercise~3.35]{GIAN}). Thus an analog of our last two results for Hankel matrices follows immediately.

The article is structured as follows. Section~\ref{sec:prem} provides some background on $F$-singularities and permanental ideals. Section~\ref{sec:square} focuses on the $F$-singularities of the hypersurface defined by the permanent of a square generic/symmetric/Hankel matrix of indeterminates. The goal of Section~\ref{sec:generic} is the complete characterization of when $\Bbbk[X]/\sqrt{P_2(X)}$ is $F$-pure or $F$-regular when $X$ is a generic matrix, and Section~\ref{sec:sym} proves the analogs in the case when $X$ is symmetric.

\section{Preliminaries}\label{sec:prem}

\subsection{The $F$-purity and $F$-regularity of $\mathbb{N}$-graded rings}

Let $S$ denote the polynomial ring $\Bbbk[x_1,\dots, x_n]$ over a field $\Bbbk$ with standard grading, i.e., all variables are of degree $1$. Let $I$ be a homogeneous ideal of $S$, and $R=S/I$ denote its corresponding homomorphic image. We remark that these are examples of $\mathbb{N}$-graded rings. For the rest of the section, unless stated otherwise, we assume $\characteristic \Bbbk = p>0$. Moreover, $q=p^e$ always denotes a power of $p$ with $e$ a positive integer. 

For an ideal $J=(m_1,\dots, m_k)$ of $R$, we define the $q$-th \emph{Frobenius power} of $J$ to be
\[
J^{[q]}\coloneqq (m_1^q,\dots, m_k^q).
\]

We define the \emph{Frobenius closure} of $J$, denoted by $J^F$, to be the collection of elements $r\in R$ such that $r^{q} \in J^{[q]}$ for some $q=p^e$. We define the \emph{tight closure} of $J$, denoted by $J^*$, to be the collection of elements $r\in R$ such that $cr^{q} \in J^{[q]}$ for all sufficiently large $q=p^e$ and some non-zerodivisor $c$ of $R$. It is clear from the definitions that $J\subseteq J^F \subseteq J^*$.

\begin{definition}
    The ring $R$ is called \emph{$F$-regular} (respectively, \emph{$F$-pure}) if $J=J^*$ (respectively, $J=J^F$) for any ideal $J$ of $R$. 
\end{definition}

It is clear that $F$-regularity implies $F$-purity. We remark that for a commutative Noetherian ring of characteristic $p>0$, there are three different definitions of $F$-regularity, and whether they are equivalent in this generality remains open. Fortunately, by celebrated results of Lyubeznik and Smith \cite[Corollaries 4.3, 4.4]{LS99}, and Hochster \cite[page 237]{Ho07}, they indeed coincide for $\mathbb{N}$-graded rings. Thus we will not distinguish among the three in this article.

\begin{theorem}\label{thm:properties-F}
    The following statements hold for $\mathbb{N}$-graded rings.
    \begin{enumerate}
        \item Regular rings are $F$-regular.
        \item Direct summands of $F$-regular rings are $F$-regular.
        \item $F$-regular rings are normal domains.
        \item $F$-pure rings are reduced.
    \end{enumerate}
\end{theorem}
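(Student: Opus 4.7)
The plan is to verify the four properties in the order (4), (1), (3), (2) to avoid the mild circularity of invoking (3) inside the argument for (2); each result is classical but draws on a distinct tool. Item (4) is essentially definitional: if $R$ is $F$-pure and $r^n = 0$, picking $q = p^e \geq n$ gives $r^q = 0 \in (0)^{[q]}$, so $r \in (0)^F = (0)$.

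Item (1) rests on Kunz's theorem that the Frobenius $F: R \to R$ is flat whenever $R$ is regular. Given $r \in J^*$ witnessed by a non-zerodivisor $c$ with $cr^q \in J^{[q]}$ for all $q \gg 0$, flatness gives the identity $J^{[q]} :_R r^q = (J :_R r)^{[q]}$, so $c \in (J :_R r)^{[q]}$ for every large $q$. If $r \notin J$, then $J :_R r$ sits in some maximal ideal $\mathfrak{m}$ after localizing, forcing $c \in \bigcap_q \mathfrak{m}^{[q]} \subseteq \bigcap_q \mathfrak{m}^q = (0)$ by Krull's intersection theorem, which contradicts $c$ being a non-zerodivisor.

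For Item (3), reducedness follows from (4). Normality uses the standard inclusion $\overline{I} \subseteq I^*$ between the integral and tight closures of an ideal: if $s/r$ is integral over $R$, unwinding an integral equation gives $s \in \overline{(r)} \subseteq (r)^* = (r)$, so $s/r \in R$. A Noetherian reduced normal ring is a finite product of normal domains; the $\mathbb{N}$-grading with $R_0 = \Bbbk$ forces every idempotent into $\Bbbk$, leaving only the trivial ones, so there is a single factor and $R$ is a normal domain.

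Item (2) then becomes clean: with splitting $\rho: S \to R$, the domain property of $S$ (from (3) applied to $S$) and injectivity of $R \hookrightarrow S$ guarantee that any non-zerodivisor $c \in R$ remains a non-zerodivisor in $S$; the defining relation $cr^q \in J^{[q]}$ then certifies $r \in (JS)^*_S = JS$ by $F$-regularity of $S$, and applying $\rho$ yields $r = \rho(r) \in \rho(JS) \subseteq J$ by $R$-linearity. The main obstacle, in my view, is Item (3): the others amount to unwinding the definitions (plus Kunz for (1)), whereas (3) requires combining the tight-closure dominance of integral closure with the structural fact that a connected $\mathbb{N}$-graded normal Noetherian ring is a domain.
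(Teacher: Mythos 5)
Your proposal is correct, but it takes a genuinely different route from the paper: the paper disposes of (1)--(3) by citing \cite[Theorem~4.2]{HH94a} and notes (4) follows from definitions, whereas you reconstruct the classical arguments from scratch. Your ingredients are the right ones: Kunz's flatness of Frobenius for (1), the containment $\overline{I}\subseteq I^*$ together with the structure of reduced Noetherian rings integrally closed in their total quotient ring for (3), and the splitting argument for (2). The ordering (4), (1), (3), (2) neatly avoids circularity, and the observation in (2) that the non-zerodivisor hypothesis on $c$ transfers to $S$ because $S$ is a domain is exactly the point that makes the weak $F$-regularity version of the summand argument go through without appealing to strong $F$-regularity. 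Two small imprecisions worth tightening: in (1), Krull's intersection theorem gives $\bigcap_q \mathfrak m^q R_{\mathfrak m}=0$ in the localization, so the correct conclusion is that $c/1=0$ in $R_{\mathfrak m}$, i.e.\ $sc=0$ for some $s\notin\mathfrak m$, contradicting that $c$ is a non-zerodivisor (in the graded setting $R$ is a polynomial ring, hence a domain, so $\bigcap_q\mathfrak m^q=0$ does hold globally, but stating the localized version is safer); and the inclusion $\overline{I}\subseteq I^*$ in (3), while standard, is a nontrivial theorem of Hochster--Huneke and should be flagged as such rather than presented as an unwinding of definitions. Compared with the paper's one-line citation, your version carries the advantage of being self-contained and of making visible exactly where each hypothesis (regularity, grading/connectedness, the splitting) enters.
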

\begin{proof}
    For the first three parts, see \cite[Theorem 4.2]{HH94a}. The last part follows directly from~definitions.
\end{proof}

Checking $F$-regularity and $F$-purity is usually a difficult task, and the popular methods are Glassbrenner's and Fedder's criteria, recorded below in the case of $\mathbb{N}$-graded algebras. We recall that the ring $R$ is called \textit{$F$-finite} if the ring $R^{1/p}$, obtained by adjoining all $p$-th roots of elements in $R$, is a finitely generated $R$-module.

\begin{theorem}[{Glassbrenner's criterion, \cite[Theorem 3.1]{Gl96}}]\label{thm:Glassbrenner}
    Assume $R=S/I$ is $F$-finite and let $\mathfrak{m}$ denote the homogeneous maximal ideal of $S$. Then $R$ is $F$-regular if and only if for each $c$ of $R$ that is not in any minimal prime of $I$, we have $c(I^{[q]}:_S I)\nsubseteq \mathfrak{m}^{[q]}$ for some $q=p^e$.
\end{theorem}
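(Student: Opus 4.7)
The plan is to reduce the statement, via the equivalence of the three notions of $F$-regularity for $\mathbb{N}$-graded rings cited just before the theorem, to a criterion for \emph{strong} $F$-regularity: namely, $R$ is $F$-regular if and only if for every $c\in R$ avoiding all minimal primes of $I$, there exists some $q=p^e$ and an $R$-linear map $\psi\colon R^{1/q}\to R$ with $\psi(c^{1/q})=1$ (so the composite $R\xrightarrow{\,c^{1/q}\,}R^{1/q}\xrightarrow{\psi}R$ is the identity). Since $R$ is $\mathbb{N}$-graded and $F$-finite, one first upgrades this to the homogeneous setting: requiring merely that $\psi(c^{1/q})$ be a unit modulo the homogeneous maximal ideal of $R$ already suffices, because graded $R$-splittings can be extracted from such maps by rescaling the degree-zero part. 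This is the graded enhancement underlying Glassbrenner's formulation.

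The second step is to translate the existence of $\psi$ into a statement about a colon ideal in $S$. Here I would invoke Fedder's lemma: since $S$ is a regular (hence Gorenstein) polynomial ring and $S^{1/q}$ is $S$-free by Kunz, one has $\operatorname{Hom}_S(S^{1/q},S)\cong S^{1/q}$ as $S^{1/q}$-modules, with a distinguished generator $\Phi$ (essentially a trace map). Applying $\operatorname{Hom}_S(-,S/I)$ to the surjection $S^{1/q}\twoheadrightarrow (S/I)^{1/q}$ and unraveling, one obtains a natural identification
\[
\operatorname{Hom}_R\bigl(R^{1/q},R\bigr)\;\cong\;\bigl(I^{[q]}:_S I\bigr)\big/I^{[q]},
\]
under which the class $\overline{\phi}$ of an element $\phi\in(I^{[q]}:_S I)$ corresponds to the map $r^{1/q}\mapsto \Phi\!\left((\phi r)^{1/q}\right)$ reduced modulo $I$. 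The key point is that, under this bijection, the image $\psi(c^{1/q})$ modulo $\mathfrak{m}$ is, up to a fixed nonzero scalar, represented by $c\phi$ read in $S/\mathfrak{m}^{[q]}$.

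Combining the two steps: the existence of a splitting of $1\mapsto c^{1/q}$ becomes exactly the assertion that some $\phi\in(I^{[q]}:_S I)$ satisfies $c\phi\notin\mathfrak{m}^{[q]}$, i.e.\ $c(I^{[q]}:_S I)\not\subseteq\mathfrak{m}^{[q]}$ for some $e$. The main obstacle is setting up the Hom-to-colon-ideal dictionary correctly and tracking how the chosen element $c$ enters it; this requires a careful bookkeeping of the $S^{1/q}$-module structure on $\operatorname{Hom}_S(S^{1/q},S)$ and of the identification $\Phi$ provides after base change by $S/I$. A secondary subtlety is the reduction from checking strong $F$-regularity globally (i.e.\ splittings $R\to R^{1/q}\to R$) to checking the colon condition only against the single homogeneous maximal ideal $\mathfrak{m}$; this relies on the $\mathbb{N}$-grading together with a Nakayama-style argument to promote a non-containment modulo $\mathfrak{m}^{[q]}$ to an actual $R$-splitting.
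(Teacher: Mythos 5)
The paper does not prove Theorem~\ref{thm:Glassbrenner}; it is cited directly from \cite{Gl96}. Your sketch reproduces the standard (indeed, Glassbrenner's own) argument: reduce to strong $F$-regularity via the Lyubeznik--Smith/Hochster equivalences for $\mathbb{N}$-graded $F$-finite rings, then use the Kunz/Fedder trace identification $\operatorname{Hom}_R(R^{1/q},R)\cong(I^{[q]}:_S I)/I^{[q]}$ to convert existence of a splitting of $1\mapsto c^{1/q}$ into $c\,(I^{[q]}:_S I)\not\subseteq\mathfrak m^{[q]}$. The translation is correct: under the trace $\Phi$ dual to $(\prod x_i^{q-1})^{1/q}$, having some $\phi\in(I^{[q]}:_S I)$ with $\Phi((c\phi)^{1/q})$ a unit is equivalent (by multiplying $\phi$ by a suitable monomial, which stays in the colon ideal) to $c\phi\notin\mathfrak m^{[q]}$ for some $\phi$, and the localization-at-$\mathfrak m$ step is exactly the point where the $\mathbb N$-grading plus the Lyubeznik--Smith theorem enter. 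The only nit is your phrasing that $\psi(c^{1/q})$ ``modulo $\mathfrak m$'' is ``represented by $c\phi$ in $S/\mathfrak m^{[q]}$''---these lie in different quotients; what you mean is that $\psi(c^{1/q})$ mod $\mathfrak m$ is the $\prod x_i^{q-1}$-coefficient of $c\phi$, and existence of a $\phi$ making this nonzero is equivalent to the ideal non-containment---but this is a matter of wording, not a gap.
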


\begin{theorem}[{Fedder's criterion, \cite[Theorem 1.12]{Fe83}}]\label{thm:Fedder}
    Assume $R=S/I$ is reduced and $F$-finite and let $\mathfrak{m}$ denote the homogeneous maximal ideal of $S$. Then $R$ is $F$-pure if and only if $(I^{[p]}:_S I)\nsubseteq \mathfrak{m}^{[p]}$.
\end{theorem}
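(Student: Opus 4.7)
The plan is to translate $F$-purity of $R=S/I$ into an explicit Frobenius-splitting problem, then use Gorenstein duality on $S$ to rewrite the resulting obstruction as an ideal containment. Since $R$ is $F$-finite, $F$-purity is equivalent to the splitting of the map $R\hookrightarrow F_*R$ of $R$-modules, i.e.\ to the existence of $\phi\in \Hom_R(F_*R, R)$ with $\phi(F_*1)=1$. In the $\mathbb{N}$-graded local setting with graded maximal ideal $\mathfrak{m}/I$ of $R$, the graded Nakayama lemma rephrases this as
\[
\operatorname{image}\bigl(\operatorname{ev}_{F_*1}\colon \Hom_R(F_*R, R)\to R,\ \phi\mapsto \phi(F_*1)\bigr)\nsubseteq \mathfrak{m}/I,
\]
so everything reduces to computing this image.

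Next I compute $\Hom_R(F_*R, R)$ via the polynomial ring $S$. Regularity and $F$-finiteness make $F_*S$ a free $S$-module, with basis $\{F_*x^{\alpha}\}_{\alpha\in \{0,\dots,p-1\}^n}$ (after harmlessly enlarging $\Bbbk$ if it is not perfect). Hence $\Hom_S(F_*S, S)$ is a rank-one free $F_*S$-module, and I fix as generator the $S$-linear map $\Phi_0$ that extracts the coefficient of $F_*x^{(p-1,\dots,p-1)}$ in this basis. Applying $\Hom_S(-,S/I)$ to the exact sequence $0\to F_*I\to F_*S\to F_*(S/I)\to 0$ and using the freeness, I identify
\[
\Hom_R(F_*R, R)\ \cong\ \frac{\{u\in F_*S:\Phi_0(u\cdot I)\subseteq I\}}{\{u\in F_*S:\Phi_0(u\cdot S)\subseteq I\}}.
\]
The core step, which I expect to be the main obstacle, is to show the numerator equals $F_*(I^{[p]}:_S I)$ and the denominator equals $F_*I^{[p]}$. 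This is established using the explicit description $I^{[p]}=\bigl\{\sum_\alpha v_\alpha^p x^\alpha : v_\alpha\in I\bigr\}$ in the chosen basis together with the identity $\Phi_0(x^{\gamma}v)=v_{(p-1,\dots,p-1)-\gamma}$ for $\gamma\in\{0,\dots,p-1\}^n$, which lets one read off every basis-coefficient of $u f$ (for $f\in I$) from the values $\Phi_0(x^\gamma u f)$ as $\gamma$ varies.

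With the resulting identification $\Hom_R(F_*R, R)\cong F_*\bigl((I^{[p]}:_S I)/I^{[p]}\bigr)$, the evaluation map becomes $\bar u\mapsto \Phi_0(u)\bmod I$. Its image contains a unit of $R$ iff there exists $u\in (I^{[p]}:_S I)$ with $\Phi_0(u)\notin \mathfrak{m}$, equivalently with $u$ having a nonzero $x^{(p-1,\dots,p-1)}$-coefficient. To match this with the ideal containment in the theorem: if $u\in (I^{[p]}:_S I)\setminus \mathfrak{m}^{[p]}$ has a nonzero $x^{\alpha}$-term for some $\alpha\in\{0,\dots,p-1\}^n$, then $x^{(p-1,\dots,p-1)-\alpha}u$ still lies in $(I^{[p]}:_S I)$ and has $\Phi_0$-value outside $\mathfrak{m}$; conversely any such $u$ automatically lies outside $\mathfrak{m}^{[p]}=(x_1^p,\dots,x_n^p)$. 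This yields the desired equivalence $(I^{[p]}:_S I)\nsubseteq \mathfrak{m}^{[p]}$, completing Fedder's criterion.
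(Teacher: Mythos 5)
The paper does not prove this statement; it simply cites it from Fedder's original article \cite[Theorem 1.12]{Fe83}, and your argument is essentially a faithful reconstruction of Fedder's own proof: reduce $F$-purity to a splitting of $R\hookrightarrow F_*R$, compute $\Hom_R(F_*R,R)$ through the free $S$-module $F_*S$ and its generating projection $\Phi_0$, identify the module of potential splittings with $F_*\bigl((I^{[p]}:_S I)/I^{[p]}\bigr)$, and finish with graded Nakayama. The one place to tighten is the basis of $F_*S$ when $\Bbbk$ is imperfect: rather than ``harmlessly enlarging $\Bbbk$'' (which would require a nontrivial base-change argument for $F$-purity and for the colon ideal), it is cleaner to observe that $F$-finiteness of $R$ forces $\Bbbk$ to be $F$-finite and to use the $S$-basis $\{F_*(\lambda_i x^\alpha)\}$ of $F_*S$, where $\lambda_1,\dots,\lambda_r$ is a $\Bbbk^p$-basis of $\Bbbk$; the rest of your computation, including the coefficient-extraction identity and the final containment argument, then goes through verbatim.
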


In the special case when $I$ is generated by a regular sequence, we have
\[ (I^{[q]}:_S I) = (\omega^{q-1}) + I^{[q]},  \]
where $\omega$ denotes the product of minimal generators of $I$. Therefore, to show that $I$ in this case defines an $F$-pure (respectively, $F$-regular) ring, it is necessary and sufficient to show that $\omega^{p-1} \notin \mathfrak{m}^{[p]}$ (respectively, that for each non-zerodivisor $c$ of $R$, we have $c\omega^{q-1} \notin \mathfrak{m}^{[q]}$ for some $q=p^e$). 

Another popular technique in showing $F$-purity and $F$-regularity is deformation. Although these two properties do not deform in general (\cite[Theorem~1.1]{Si99} and \cite[Example~4.8]{Fe83}), they do deform for Gorenstein rings:

\begin{theorem}[{\cite[Theorem 4.2]{HH94a}, \cite[Theorem 3.4]{Fe83}}]\label{thm:fregdeform}
    Let $f$ be an $R$-regular element. If $R$ is Gorenstein and $R/fR$ is $F$-regular (respectively, $F$-pure), then $R$ is $F$-regular (respectively, $F$-pure).
\end{theorem}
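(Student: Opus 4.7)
The plan is to apply Fedder's criterion (Theorem~\ref{thm:Fedder}) for the $F$-pure statement and Glassbrenner's criterion (Theorem~\ref{thm:Glassbrenner}) for the $F$-regular statement. Write $R = S/I$ with $S = \Bbbk[x_1,\dots,x_n]$ and homogeneous maximal ideal $\mathfrak{m}$, and set $J = I + fS$ so that $R/fR = S/J$; since $f$ is $R$-regular and $R$ is Gorenstein, $S/J$ is Gorenstein as well. The task then reduces to propagating non-containments of the colon ideals $(I^{[q]} :_S I)$ and $(J^{[q]} :_S J)$ in $\mathfrak{m}^{[q]}$ from $J$ back to $I$.

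The essential input from Gorensteinness is that for each $q = p^e$, the module $(I^{[q]} :_S I)/I^{[q]}$ is cyclic over $S/I^{[q]}$: there exists $u \in S$ with $(I^{[q]} :_S I) = (u) + I^{[q]}$, and similarly $(J^{[q]} :_S J) = (v) + J^{[q]}$ for some $v$. This generalizes the complete-intersection formula $(I^{[q]} :_S I) = (\omega^{q-1}) + I^{[q]}$ recorded in the excerpt, and reflects the self-duality of $\Hom_R(F^e_* R, R)$ in the Gorenstein setting. The key bridge I would then establish is the congruence
\[
v \;\equiv\; f^{q-1}\, u \pmod{J^{[q]}},
\]
obtained by directly computing $\bigl((I^{[q]} + f^q S) :_S (I + fS)\bigr)$ and invoking cyclicity to identify its generator.

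Granting this bridge, the $F$-pure case is immediate: Fedder applied to $R/fR$ yields $v \notin \mathfrak{m}^{[p]}$, hence $f^{p-1} u \notin \mathfrak{m}^{[p]}$, and since $u \in \mathfrak{m}^{[p]}$ would force $f^{p-1} u \in \mathfrak{m}^{[p]}$, we conclude $u \notin \mathfrak{m}^{[p]}$, so $R$ is $F$-pure. For $F$-regularity, given $c \in S$ whose class in $R$ avoids the minimal primes of $I$, I would use Glassbrenner applied to $R/fR$ to produce $q$ with $c v \notin \mathfrak{m}^{[q]}$, i.e., $c f^{q-1} u \notin \mathfrak{m}^{[q]}$, and the same divisibility argument gives $c u \notin \mathfrak{m}^{[q]}$. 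One subtlety: the image $\bar c$ in $R/fR$ could lie in an associated (height-one) prime of $(f)R$ while avoiding the minimal primes of $R$; this is handled by the standard device of replacing $c$ with $c\cdot d$ for a test element $d$ of $R/fR$ whose lift to $R$ also avoids the minimal primes of $I$.

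The main obstacle is the cyclicity assertion together with the precise identification $v \equiv f^{q-1} u \pmod{J^{[q]}}$. This encodes how the Frobenius trace for $R/fR$ arises from that of $R$ via pre-composition with multiplication by $f^{q-1}$, which in turn reflects the change of canonical modules under Gorenstein deformation by a non-zero-divisor. Once this structural fact is in place, the rest of the argument is the elementary divisibility observation used throughout.
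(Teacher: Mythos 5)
The paper does not prove this statement --- it is quoted from Fedder and from Hochster--Huneke --- so there is no internal proof to match your argument against. Your outline is the standard Fedder-style route, and the skeleton is right, but the one piece you flag as ``the main obstacle'' is in fact the entire content of the theorem, and the method you propose for obtaining it does not deliver it.

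Concretely: the containment $f^{q-1}(I^{[q]}:_S I) + J^{[q]} \subseteq (J^{[q]}:_S J)$ is indeed a direct computation --- if $g \in (I^{[q]}:_S I)$ then $f^{q-1}gI \subseteq I^{[q]} \subseteq J^{[q]}$ and $f^{q-1}g\cdot f = f^q g \in J^{[q]}$. But this easy direction is useless for deformation: assuming $u \in \mathfrak{m}^{[p]}$ only yields $f^{p-1}(I^{[p]}:_S I) + J^{[p]} \subseteq \mathfrak{m}^{[p]}$, which says nothing about whether $(J^{[p]}:_S J) \subseteq \mathfrak{m}^{[p]}$. What the argument actually requires is the reverse containment $(J^{[q]}:_S J) \subseteq f^{q-1}(I^{[q]}:_S I) + J^{[q]}$, equivalently that $f^{q-1}u$ \emph{generates} the cyclic module $(J^{[q]}:_S J)/J^{[q]}$. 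That is not obtained by ``directly computing the colon and invoking cyclicity''; it is exactly where Gorensteinness and duality do real work (Fedder identifies $(I^{[q]}:_S I)/I^{[q]}$ with $\Hom_{S/I^{[q]}}(S/I,\,S/I^{[q]})$, cyclic because $S/I$ is Gorenstein, and then a socle/length argument pins down the generator after cutting by the nonzerodivisor $f$). So as written there is a gap at precisely the step everything else rests on. The surrounding reductions --- Gorensteinness of $R/fR$, the divisibility step $f^{p-1}u \notin \mathfrak{m}^{[p]} \Rightarrow u \notin \mathfrak{m}^{[p]}$, and the test-element patch for the choice of $c$ in Glassbrenner --- are all fine. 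For the $F$-regular case note also that the hard containment and the cyclicity must hold for every $q = p^e$, so their multiplicativity in $e$ is an additional ingredient being taken for granted.
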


It turns out that it is sufficient to verify Glassbrenner's criterion for just one certain non-zerodivisor $c$, and such an element is called a \textit{test element}. There are many test elements. In this article, the lemma below is how we will obtain test elements.

\begin{lemma}[{\cite[Corollary 2.6]{TC-Commutator}}]\label{lem:testelement}
    Assume that $R$ is an $F$-finite and $F$-pure ring. Then any non-zerodivisor $c$ such that $R_c$ is regular is a test element.
\end{lemma}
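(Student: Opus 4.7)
The plan is to combine the classical Hochster--Huneke existence theorem for test elements (which produces \emph{some power} of $c$ as a test element under the regularity-of-$R_c$ hypothesis) with the $F$-purity assumption (which lets us remove that power). First I would record that an $F$-pure ring is reduced by Theorem~\ref{thm:properties-F}(4), so the hypotheses of Hochster--Huneke's theorem are met: $R$ is reduced, $F$-finite, and $c$ is a non-zerodivisor such that $R_c$ is regular. This produces an integer $n\ge 1$ such that $c^n$ is a test element, i.e., for every ideal $I\subseteq R$ and every $x\in I^*$ we have $c^n x^{q}\in I^{[q]}$ for all $q=p^e$.

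The core of the argument is then the upgrade step, which uses $F$-purity through the characterization that $J^F=J$ for every ideal $J$. Fix an ideal $I$, an element $x\in I^*$, and a power $q=p^e$; I want $cx^q\in I^{[q]}$. Choose any $q''=p^{e''}$ with $q''\ge n$. Since $c^n$ is a test element, applied to the power $qq''$ we obtain $c^n x^{qq''}\in I^{[qq'']}$, and hence
\[
(cx^q)^{q''} = c^{q''} x^{qq''} = c^{q''-n}\bigl(c^n x^{qq''}\bigr)\in I^{[qq'']} = \bigl(I^{[q]}\bigr)^{[q'']}.
\]
This exhibits $cx^q\in \bigl(I^{[q]}\bigr)^F$, and $F$-purity of $R$ forces $\bigl(I^{[q]}\bigr)^F=I^{[q]}$, so $cx^q\in I^{[q]}$ as desired. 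Since this holds for every $q$, $c$ itself satisfies the test element condition.

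The main obstacle is purely conceptual: invoking the Hochster--Huneke existence theorem for test elements, which is the nontrivial ingredient and would be cited rather than reproved (see, e.g., \cite{HH94a}). The subsequent passage from ``$c^n$ is a test element'' to ``$c$ is a test element'' via Frobenius closure is essentially formal once $F$-purity is available, so I expect the proof in the paper to either reference the Hochster--Huneke theorem directly or to cite \cite{TC-Commutator} and carry out only the short upgrade step outlined above.
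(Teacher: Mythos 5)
The paper does not prove this lemma; it cites it verbatim from \cite[Corollary 2.6]{TC-Commutator}, so there is no in-text proof to compare against. Your argument is correct and is the standard way to establish the result: the Hochster--Huneke existence theorem (applicable since $F$-purity gives reducedness, hence ``non-zerodivisor'' and ``not in any minimal prime'' coincide) produces $n$ with $c^n$ a test element, and the Frobenius-closure computation
\[
(cx^q)^{q''}=c^{q''-n}\bigl(c^n x^{qq''}\bigr)\in I^{[qq'']}=\bigl(I^{[q]}\bigr)^{[q'']}
\]
for $q''=p^{e''}\ge n$ shows $cx^q\in\bigl(I^{[q]}\bigr)^F=I^{[q]}$, the last equality by $F$-purity. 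The only step worth flagging is that you implicitly use that $qq''$ is again a power of $p$ and that $(I^{[q]})^{[q'']}=I^{[qq'']}$, both of which are immediate, so the argument is complete. This almost certainly coincides with the cited proof, since the ``strip the exponent via $F$-purity'' trick is exactly the point of that corollary.
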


\subsection{Permanental ideals}

We recall the following result about determinantal ideals.

\begin{theorem}[{\cite[Theorem 7.14]{HH94b} and \cite[Theorem 2.2]{CSV24}}]\label{thm:det}
    Let $\Bbbk$ be a field of positive characteristic $p>0$, and $X$ a generic or symmetric matrix of indeterminates. Then $\Bbbk[X]/I_t(X)$ is $F$-regular for any integer~$t$.
\end{theorem}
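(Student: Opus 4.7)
The plan is to establish $F$-regularity of $\Bbbk[X]/I_t(X)$ via a direct summand argument when possible, or, when the characteristic obstructs that, via a Frobenius-splitting/test-element approach that invokes Theorems~\ref{thm:Fedder}, \ref{thm:Glassbrenner} and Lemma~\ref{lem:testelement}. In characteristic zero the classical route realizes $\Bbbk[X]/I_t(X)$ as a ring of invariants of a linearly reductive group (Cauchy's identity exhibits it as an $\mathrm{SL}_{t-1}$-invariant subring of a polynomial ring), after which Theorem~\ref{thm:properties-F}(1)--(2) closes the argument. In positive characteristic $\mathrm{SL}_{t-1}$ is not linearly reductive, so this direct summand route is unavailable and one must work with Frobenius more intrinsically.

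My first step is to prove $F$-purity using Fedder's criterion (Theorem~\ref{thm:Fedder}). The crucial input is the algebra-with-straightening-law structure of $\Bbbk[X]/I_t(X)$ on standard bitableaux (DeConcini--Eisenbud--Procesi). Using the straightening relations, I would exhibit an explicit element $\omega$, essentially the product of a maximal nested family of $t$-minors, and verify that $\omega^{p-1} \in \bigl(I_t(X)^{[p]} :_S I_t(X)\bigr)$; the non-containment $\omega^{p-1} \notin \mathfrak{m}^{[p]}$ is then a monomial-support check, because after expanding $\omega$ as a sum of standard monomials, the leading monomial carries multidegree strictly below $p$ in every variable.

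For $F$-regularity, I would combine $F$-purity from the previous step with Lemma~\ref{lem:testelement} to produce a test element: any fixed $(t-1)$-minor $\delta$ is a non-zerodivisor on $\Bbbk[X]/I_t(X)$, and localizing at $\delta$ yields a regular ring (as the resulting $2 \times 2$-block-decomposition is invertible). Glassbrenner's criterion (Theorem~\ref{thm:Glassbrenner}) then reduces the problem to showing $\delta \cdot \omega^{q-1} \notin \mathfrak{m}^{[q]}$ for some $q = p^e$; this should follow by iterating the bideterminant expansion of $\omega^{q-1}$ and tracking a single surviving monomial whose total exponent in each variable is $< q$. The symmetric case is handled by the same strategy after replacing bitableaux with standard \emph{symmetric} bitableaux and the minors with doubled-diagonal minors.

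The main obstacle is the combinatorial bookkeeping. Both steps rest on the same kind of monomial tracking through the straightening procedure, and while the existence of the required $\omega$ is guaranteed by the ASL structure, pinning down an explicit non-zero monomial contribution modulo $\mathfrak{m}^{[q]}$ is genuinely delicate, especially once $q$ is large. In the symmetric setting, the identifications $x_{ij} = x_{ji}$ double monomial exponents on the diagonal and force the candidate $\omega$ to be chosen more carefully, so that the $(p-1)$-st power still avoids the Frobenius power of the maximal ideal. This is precisely the point where the references \cite{HH94b} and \cite{CSV24} do the hard work, and any streamlined reproof would have to reproduce this combinatorial core.
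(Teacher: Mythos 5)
The paper itself does not prove Theorem~\ref{thm:det}: it is quoted as a known result, with the generic case attributed to \cite[Theorem 7.14]{HH94b} and the symmetric case to \cite[Theorem 2.2]{CSV24}. There is therefore no internal proof to compare your attempt against; what you have written is a roadmap for reproving the cited results.

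As a roadmap, your sketch is broadly consistent with the strategy used in the literature (Fedder/Glassbrenner criteria together with the ASL structure on standard (symmetric) bitableaux, with a $(t-1)$-minor serving as a test element because the localization is a polynomial extension of a smaller situation). But two points are glossed over in a way that would matter if you tried to execute it. First, $I_t(X)$ is not a complete intersection outside of degenerate cases, so the identity $(I^{[p]}:_S I)=(\omega^{p-1})+I^{[p]}$ with $\omega$ a product of generators is unavailable; you must prove membership $\omega^{p-1}\in(I^{[p]}:_S I)$ (and, for Glassbrenner, $\omega^{q-1}\in(I^{[q]}:_S I)$ for all $q=p^e$, which does not follow formally from the $q=p$ case for non-complete-intersections) by a genuine argument, and this is exactly where the straightening combinatorics becomes non-trivial. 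Second, the characteristic-zero direct-summand reduction you mention uses $GL_{t-1}$ (not $SL_{t-1}$) acting on the pair of generic matrices $(Y,Z)$ via $X\mapsto YZ$; this is a cosmetic slip, and in any case irrelevant since the theorem is stated in characteristic $p>0$, where that route is closed. None of this is a fatal objection to the approach, but you should recognize that the substance of \cite{HH94b} and \cite{CSV24} lies precisely in the two computations you defer, so the proposal is a correct outline rather than a proof.
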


Algebras defined by determinantal ideals are among the first examples of $F$-regular rings. We remark that by definitions, permanental ideals and determinantal ideals coincide when the base field $\Bbbk$ is of characteristic $2$. We will thus focus on the case where the two classes do not coincide, i.e., when $\characteristic \Bbbk \neq 2$.  

It turns out that in this case, permanental ideals contain many monomials. We recall the two lemmas that will be used later regarding $P_2(X)$. These are from \cite{LS00}. We note that the authors in this paper focus on the case where $X$ is a generic matrix, but their proofs in fact work for any matrix. We remark that Kirkup has a similar result (\cite[Corollary 6]{Kirkup}) for $P_t(X)$ in general.

\begin{lemma}[{\cite[Lemma 2.1]{LS00}}]\label{lem:three-element-in-permanent}
     Let $X$ be an $m\times n$ matrix of indeterminates where $m,n\geq 2$. Then the ideal $P_2(X)$ contains all products of three entries of $X$, taken from three distinct columns and two distinct rows (if $n\geq 3$), or from two distinct columns and three distinct rows (if $m\geq 3$).
\end{lemma}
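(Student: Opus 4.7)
The plan is to exploit the fact that when $\characteristic \Bbbk \neq 2$, the element $2$ is a unit, so it suffices to exhibit twice the target monomial as an explicit element of $P_2(X)$ built from a small number of generators. By transposing $X$ if necessary, I may restrict to the case of three distinct columns and exactly two distinct rows; by pigeonhole, two of the three entries share a row, and after relabeling the entries take the form $x_{a,c_1}, x_{a,c_2}, x_{b,c_3}$ with $a\neq b$ and $c_1,c_2,c_3$ pairwise distinct. The target monomial is then $\mu := x_{a,c_1}x_{a,c_2}x_{b,c_3}$.

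The central computation is to form three specific multiples of generators of $P_2(X)$. Namely, I would take the $2\times 2$ permanents on rows $\{a,b\}$ and the three column pairs $\{c_1,c_3\}$, $\{c_2,c_3\}$, $\{c_1,c_2\}$, then multiply these by $x_{a,c_2}$, $x_{a,c_1}$, $x_{a,c_3}$ respectively, producing three elements $E_1,E_2,E_3$ of $P_2(X)$. Each $E_i$ is a sum of two cubic monomials, and direct bookkeeping shows that $\mu$ appears in both $E_1$ and $E_2$ but not in $E_3$, while the four remaining ``mixed'' monomials pair up so that $E_1+E_2-E_3 = 2\mu$. Since $\characteristic \Bbbk\neq 2$, this yields $\mu \in P_2(X)$, as required.

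The only real obstacle is this factor of $2$: the argument is genuinely characteristic-sensitive, and in fact the statement fails in characteristic $2$ (for instance, on a $2\times 3$ matrix one can check that $x_{1,1}x_{1,2}x_{2,3}$ does not lie in $I_2(X)=P_2(X)$). Since the paper operates throughout under $\characteristic \Bbbk > 2$, this obstacle does not arise, and the linear combination described above completes the proof, with the transposition step handling the parallel case of two distinct columns and three distinct rows.
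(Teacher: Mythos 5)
Your computation is correct: with $E_1 = x_{a,c_2}(x_{a,c_1}x_{b,c_3}+x_{a,c_3}x_{b,c_1})$, $E_2 = x_{a,c_1}(x_{a,c_2}x_{b,c_3}+x_{a,c_3}x_{b,c_2})$, and $E_3 = x_{a,c_3}(x_{a,c_1}x_{b,c_2}+x_{a,c_2}x_{b,c_1})$, the mixed terms cancel and $E_1+E_2-E_3=2\mu$, so $\mu\in P_2(X)$ once $2$ is invertible; transposition handles the other case, and your observation that the claim genuinely fails in characteristic $2$ (the all-ones rank-one matrix witnesses $x_{11}x_{12}x_{23}\notin I_2(X)$) is also accurate. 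The paper itself does not reprove this lemma but quotes it from \cite[Lemma~2.1]{LS00}; your syzygy-style linear combination is the standard argument and matches what that reference does, so there is nothing to reconcile against a proof in the present paper.
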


\begin{lemma}[{\cite[Lemma 2.2]{LS00}}]\label{lem:three-element-in-permanent-2}
    Let $X$ be an $m\times n$ matrix of indeterminates where $m,n\geq 3$. Then the ideal $P_2(X)$ contains all products of the form $x_{i_1j_1}^2x_{i_2j_2}x_{i_3j_3}$ with distinct $i_1,i_2,i_3$ and distinct $j_1,j_2,j_3$.
\end{lemma}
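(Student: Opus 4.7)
The plan is to exhibit $x_{i_1j_1}^2 x_{i_2j_2} x_{i_3j_3}$ as a product of two explicit $2\times 2$ permanents, modulo ``error'' terms that Lemma~\ref{lem:three-element-in-permanent} places into $P_2(X)$.

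First I would reduce to a normal form. Any simultaneous permutation of the rows or columns of $X$ induces an automorphism of $\Bbbk[X]$ carrying $P_2(X)$ to itself, as it merely permutes the set of $2\times 2$ submatrices and hence their permanents. Applying such a relabelling, I may assume $(i_1,j_1)=(1,1)$ and $\{i_2,i_3\}=\{j_2,j_3\}=\{2,3\}$. Two subcases remain: the ``diagonal'' configuration $(i_2,j_2,i_3,j_3)=(2,2,3,3)$, and the ``anti-diagonal'' one $(2,3,3,2)$.

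In the diagonal subcase, I would exploit the identity
\[
(x_{11}x_{22}+x_{12}x_{21})(x_{11}x_{33}+x_{13}x_{31}) = x_{11}^2 x_{22}x_{33} + x_{11}x_{13}x_{22}x_{31} + x_{11}x_{12}x_{21}x_{33} + x_{12}x_{13}x_{21}x_{31}.
\]
The left-hand side is a product of two generators of $P_2(X)$, and solving for $x_{11}^2 x_{22}x_{33}$ leaves me to check that the remaining three summands lie in $P_2(X)$. In each one I can single out a subproduct of exactly three entries supported on just two rows and three distinct columns; for instance $x_{11}x_{13}x_{22}$, $x_{11}x_{12}x_{33}$, and $x_{12}x_{13}x_{21}$ respectively. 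Lemma~\ref{lem:three-element-in-permanent} then places each subproduct, and hence each full cross term, into $P_2(X)$. The anti-diagonal subcase is handled in exactly the same way, starting from the product $(x_{11}x_{23}+x_{13}x_{21})(x_{11}x_{32}+x_{12}x_{31})$ and performing the analogous subproduct extraction on the three cross terms that appear.

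I do not expect a genuine obstacle: the argument is bookkeeping on a $3\times 3$ window of $X$, and the substantive input is already packaged in Lemma~\ref{lem:three-element-in-permanent}. The only mild point to watch is verifying in each subcase that every cross term of the expanded product really does contain a three-entry subproduct fitting the ``two rows, three columns'' (or ``two columns, three rows'') pattern required by that lemma; this is a routine inspection of six short monomials.
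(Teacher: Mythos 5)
Your argument is correct, and since the paper does not reprove this lemma but cites it to \cite{LS00}, there is no in-paper proof to compare against; your approach (multiply two $2\times 2$ permanents sharing the entry $x_{11}$ and absorb the cross terms via Lemma~\ref{lem:three-element-in-permanent}) is a clean standalone proof.

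Two small remarks. First, you could have reduced to a single case: having permuted rows and columns independently, you are free to send $i_2\mapsto 2,\ i_3\mapsto 3$ and $j_2\mapsto 2,\ j_3\mapsto 3$, which forces the diagonal configuration $x_{11}^2x_{22}x_{33}$; the anti-diagonal case is then obtained by a further transposition of columns $2$ and $3$, so treating it separately is harmless but redundant. Second, the phrase ``the analogous subproduct extraction'' in the anti-diagonal case glosses over the one spot where a mechanical imitation of the diagonal case fails: in the cross term $x_{11}x_{13}x_{21}x_{32}$, the subproduct $x_{11}x_{13}x_{21}$ is supported on only two columns $\{1,3\}$ and so does not fit the pattern of Lemma~\ref{lem:three-element-in-permanent}; one must instead take $x_{11}x_{13}x_{32}$, which uses rows $\{1,3\}$ and columns $\{1,2,3\}$. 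Once that choice is made, all three cross terms in each subcase do contain a valid three-entry subproduct, and the proof goes through.
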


\section{The $F$-regularity of the hypersurface defined by the permanent of a square generic/symmetric/Hankel matrix}\label{sec:square}

The goal of this section is to show that the hypersurface $\Bbbk[X]/P_n(X)$ is $F$-regular whenever $\Bbbk$ is a field of positive characteristic $p>2$ and $X$ an $n\times n$ generic/symmetric/Hankel matrix. In fact, we will prove the result for a Hankel matrix first, and analogs for generic/symmetric matrices later as a corollary.

To show the $F$-regularity of a ring, it suffices to do so for a faithfully flat extension \cite[Proposition~4.12]{HH-BrianconSkoda}. Therefore, for the rest of this section, we can assume that $\Bbbk$ is algebraically closed. Set 
\[
Z_n=\begin{pmatrix}
    z_1 & z_2 & \cdots & z_n\\
    z_2 & z_3 & \cdots & z_{n+1}\\
    \vdots & \vdots &\ddots &\vdots\\
    z_{n} & z_{n+1} & \cdots & z_{2n-1}
\end{pmatrix}.
\]
Then $Z_n$ is an $n\times n$ Hankel matrix of indeterminates. We start with proving that $\perm(Z)$ is irreducible, for which we need a lemma.

\begin{lemma}\label{lem:perm-not-contain-nice-monomials}
    The polynomial $\perm(Z_n)$ does not contain a monomial term of the form $z_{n+1}^{n-k} z_n^{k}$ for any $k\in [0,\ n-1]$.
\end{lemma}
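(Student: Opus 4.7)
The plan is to pin down exactly which permutations $\sigma \in S_n$ could contribute a term of the form $z_{n+1}^{n-k} z_n^{k}$ to $\perm(Z_n) = \sum_{\sigma} \prod_{i=1}^n z_{i+\sigma(i)-1}$. Since the $(i,j)$-entry of $Z_n$ is $z_{i+j-1}$, the monomial associated to $\sigma$ lies in the subring $\Bbbk[z_n, z_{n+1}]$ if and only if $i + \sigma(i) \in \{n+1, n+2\}$ for every $i \in [1,n]$. Letting $k$ denote the number of indices with $i + \sigma(i) = n+1$, the resulting monomial is precisely $z_n^{k} z_{n+1}^{n-k}$, so it suffices to show that no such $\sigma$ can exist when $k \leq n-1$.

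The key step is a one-line double count. On the one hand, $\sum_{i=1}^n (i + \sigma(i)) = n(n+1)$, since $\sigma$ permutes $[1,n]$. On the other hand, partitioning indices by the two possible values of $i+\sigma(i)$ yields $(n+1)k + (n+2)(n-k) = n(n+2) - k$. Equating the two expressions forces $k = n$, so the only permutation whose term lies in $\Bbbk[z_n, z_{n+1}]$ is the reversal $\sigma(i) = n+1-i$, which contributes $z_n^n$. This corresponds to $k = n$, outside the range $[0, n-1]$, so no monomial of the claimed form appears in $\perm(Z_n)$.

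I do not foresee any real obstacle: the argument is purely combinatorial and independent of $\characteristic \Bbbk$. The fact that one is working with the permanent (rather than a signed variant) is actually helpful here, since we only need to rule out the existence of a single admissible $\sigma$, without worrying about possible cancellations among distinct terms.
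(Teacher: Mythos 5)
Your proof is correct. The underlying combinatorial fact is the same as the paper's—that the only permutation $\sigma$ whose monomial $\prod_i z_{i+\sigma(i)-1}$ lies in $\Bbbk[z_n, z_{n+1}]$ is the reversal $\sigma(i)=n+1-i$, contributing $z_n^n$—but you reach it by a different route. The paper specializes all variables other than $z_n, z_{n+1}$ to zero and simply asserts that the resulting permanent equals $z_n^n$, a step it does not justify; your double-count $\sum_i (i+\sigma(i)) = n(n+1) = n(n+2)-k$ forcing $k=n$ is precisely the missing justification, and it replaces the substitution entirely. Your version is thus a bit more self-contained. One small stylistic point: you might note explicitly that the coefficient of any monomial in $\perm(Z_n)$ is a nonnegative integer counting admissible permutations, so showing that \emph{zero} permutations contribute a given monomial is enough to rule it out over any $\Bbbk$—you gesture at this in your last sentence, and it is indeed what makes the argument characteristic-independent.
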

\begin{proof}
    Suppose otherwise that $\perm(Z_n)$ contains a monomial term of the form $z_{n-1}^{n-k} z_n^{k}$ for some $k\in [0,\ n-1]$. Then this term survives after substitute $0$ into the variables $z_i$ for any $i\in [0,\ n) \cup (n+1,\ 2n-1]$. After the substitutions, $\perm(Z_n)$ becomes
    \[
    \perm \begin{pmatrix}
    0 & 0 & \cdots & z_n\\
    0 & 0 & \cdots & z_{n+1}\\
    \vdots & \vdots &\ddots &\vdots\\
    z_{n} & z_{n+1} & \cdots & 0
\end{pmatrix} = z_n^n,
    \]
    a contradiction. Thus the result follows.
\end{proof}

\begin{lemma}\label{lem:square-Hankel-domain}
    For any $n\geq 1$, the ring $\Bbbk[Z]/P_n(Z_n)$ is a domain.
\end{lemma}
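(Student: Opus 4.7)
The plan is to prove the lemma by induction on $n$, using the equivalence between $\Bbbk[Z_n]/(\perm(Z_n))$ being a domain and $\perm(Z_n)$ being irreducible in the UFD $\Bbbk[z_1,\ldots,z_{2n-1}]$. The base case $n=1$ is immediate since $\perm(Z_1)=z_1$ is irreducible.

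For the inductive step, the first observation is that $z_1$ appears only at the $(1,1)$ entry of $Z_n$, so a first-row Laplace-type expansion for permanents yields
\[
\perm(Z_n) \;=\; z_1 \cdot A + B,
\]
where $A=\perm(Z'_{n-1})\in\Bbbk[z_3,\ldots,z_{2n-1}]$ is the permanent of the Hankel submatrix obtained by deleting row $1$ and column $1$, and $B=\perm(Z_n)|_{z_1=0}\in\Bbbk[z_2,\ldots,z_{2n-1}]$. Relabeling variables identifies $Z'_{n-1}$ with $Z_{n-1}$, so the inductive hypothesis delivers that $A$ is irreducible. Next I would view $\perm(Z_n)=Az_1+B$ as a polynomial of degree one in $z_1$ over the UFD $\Bbbk[z_2,\ldots,z_{2n-1}]$; Gauss's lemma then reduces the irreducibility of $\perm(Z_n)$ to the primitivity condition $\gcd(A,B)=1$, which since $A$ is irreducible is equivalent to the single assertion $A\nmid B$.

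The crux of the argument is verifying $A\nmid B$, and I would do this by exhibiting a specific witness monomial. Supposing for contradiction that $B = L\cdot A$ for some homogeneous linear form $L = \sum_{k\geq 2} c_k z_k$, I would compare the coefficient of $z_n^n$ on both sides. On the $B$-side this coefficient equals $1$, because the antidiagonal permutation $\sigma(i)=n+1-i$ produces $z_n^n$ and satisfies $\sigma(1)=n\neq 1$, so its contribution survives the substitution $z_1=0$. On the $L\cdot A$ side, the coefficient of $z_n^n$ equals $c_n$ times the coefficient of $z_n^{n-1}$ in $A$, and a short combinatorial check shows the latter vanishes: producing $z_n^{n-1}$ in $\perm(Z'_{n-1})$ would require a permutation $\tau$ of $[1,n-1]$ with $\tau(i)+i+1=n$ for every $i$, forcing $\tau(n-1)=0\notin[1,n-1]$. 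The resulting contradiction yields $A\nmid B$.

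The main obstacle in this plan is precisely the last step: one needs to find a witness monomial in $B$ that cannot arise from any linear multiple of $A$. The choice $z_n^n$ is natural because it is the ``exceptional'' antidiagonal monomial in the same spirit as the one underlying Lemma~\ref{lem:perm-not-contain-nice-monomials}, and the combinatorial obstruction $\tau(n-1)=0$ makes the divided monomial $z_n^{n-1}$ inaccessible from inside $A$ for purely Hankel-structural reasons.
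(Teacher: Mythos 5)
Your proof is correct, and it takes a genuinely different route from the paper. The paper works directly via Eisenstein's criterion: it views $\perm(Z_n)$ as a monic polynomial in $z_n$ over $R=\Bbbk[z_i:i\neq n]$, invokes Lemma~\ref{lem:perm-not-contain-nice-monomials} to show all the non-leading coefficients lie in the prime $P=(z_i:i\in[1,n)\cup(n+1,2n-1])$, and exhibits the monomial $z_1 z_{n+1}^{n-1}$ to verify $a_0\notin P^2$. You instead induct on $n$, isolate the unique $z_1$ entry to write $\perm(Z_n)=Az_1+B$, observe that $A$ is (up to relabeling) $\perm(Z_{n-1})$ and hence irreducible by induction, and then apply Gauss's lemma: a degree-one polynomial in $z_1$ over the UFD $\Bbbk[z_2,\dots,z_{2n-1}]$ is irreducible iff it is primitive, which since $A$ is prime amounts to $A\nmid B$. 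The final coefficient comparison is sound: $z_n^n$ occurs in $B$ with coefficient $1$ (the antidiagonal permutation, which avoids the $(1,1)$ entry since $n\geq 2$), while $z_n^{n-1}$ cannot occur in $A$ because the defining permutation would have to send $n-1\mapsto 0$. The two proofs ultimately lean on the same antidiagonal structure of the Hankel matrix, but your inductive framing replaces the paper's multi-coefficient Eisenstein check with a single coefficient comparison, at the cost of carrying an induction; the paper's direct argument avoids induction but must verify a membership condition for every lower-order coefficient in $z_n$. Both are clean and valid.
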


\begin{proof}
    The ideal $P_n(Z_n)$ is generated by the permanent of $Z_n$. Thus the problem becomes showing that $\perm(Z_n)$ is irreducible, which we will show with Eisenstein's criterion (see, e.g., \cite[Chapter 9, Proposition 13]{DummitFoote}) for large $n$. If $n=1$, then $\perm(Z_n)=z_1$ is irreducible. If $n=2$, then $\perm(Z_n)=z_2^2+z_1z_3= \det \begin{pmatrix}
        z_2 & -z_{1}\\
        z_3&z_2
    \end{pmatrix}$ is irreducible, as desired. 
    
    Now assume that $n\geq 3$. Set 
    \begin{align*}
        R&\coloneqq\Bbbk[z_i\mid i\in [1,n) \cup (n,2n-1]],\\
        P&\coloneqq (z_i\mid i\in [1,n)\cup (n+1,2n-1]),\\
        \perm (Z_n) &\coloneqq \sum_{i=0}^n a_iz_n^i,
    \end{align*}
    where $a_i\in R$ for each $i\in [0,n]$. It is clear that $P$ is a prime ideal of $R$, and $\perm(Z_n)$ is a monic polynomial, i.e., $a_n=1$, in the polynomial ring $R[z_n]$. By Eisenstein's criterion, it suffices to show that $a_i\in P$, for each $i\in [0,\ n-1]$, and $a_0\notin P^2$. 
    
    Indeed, it is clear that, for each $i\in [0,\ n-1]$, the polynomial $a_i$ is homogeneous, and is not a unit. Due to Lemma~\ref{lem:perm-not-contain-nice-monomials}, $a_i$ does not contain a power of $z_{n+1}$ as a monomial term, and thus, must be in $P$. This verifies the first condition of the criterion. It remains to show that $a_0\notin P^2$. Observe that $a_0$ contains a summand that is a multiple of $z_1z_{n+1}^{n-1}$. As there is exactly one way to obtain such a monomial, $z_1z_{n+1}^{n-1}$ itself is a summand of $a_0$. Since $P^2$ is a monomial ideal and $z_1z_{n+1}^{n-1}\notin P^2$, we have $a_0\notin P^2$, as desired.
\end{proof}

Due to this result, any nonzero element of $\Bbbk[Z_n]/P_n(Z_n)$ is a nonzero divisor. To prove that this ring is $F$-regular, we now need a test element.  Consider the polynomial $\frac{\partial \perm(Z_n)}{z_{2n-1}}$ in the singular locus of $\Bbbk[Z_n]/P_n(Z_n)$ (see, e.g., \cite[Corollary 16.20]{Eisenbud}). We notice that this polynomial is exactly the permanent of $Z_{n-1}$. For this reason, we set $f_n\coloneqq \perm(Z_n)$. To use Glassbrenner's criterion, we will need to solve some ideal membership problem. For a matrix $Z$ and two integers $i$ and $j$, let $Z^i_j$ be the matrix $Z$ without the $i$-th column and $j$-th row. In particular, we have $(Z_n)^n_n=Z_{n-1}$.

\begin{lemma}\label{lem:ideal-membership-square-Hankel}
    Assume $p>2$. Then for any $n\geq 1$, we have \[
    \left(f_{n-1}f_{n}^{p-1}\right) \left(\prod_{i=1}^{n-1} z_{2i+1} \right)\left(\prod_{i=1}^{n-1} z_{2i}\right)^{p-3} = (-1)^{n+1} \left( \prod_{i=1}^{2n-1} z_i \right)^{p-1} \text{ modulo } (z_1^p, z_2^p,\dots, z_{2n-1}^p).
    \]
\end{lemma}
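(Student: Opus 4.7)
The plan is to induct on $n$. The base case $n=1$ is immediate: all the displayed products are empty, $f_0 = 1$, and both sides equal $z_1^{p-1}$.

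For the inductive step I begin with a useful degree-count observation. The LHS has total degree $(2n-1)(p-1)$, and since every exponent in a monomial surviving modulo $(z_1^p,\dots,z_{2n-1}^p)$ is at most $p-1$, the only such monomial of this total degree is $\prod_{i=1}^{2n-1} z_i^{p-1}$ itself. Thus modulo the ideal, the LHS is automatically a scalar multiple of $\prod z_i^{p-1}$, and it suffices to show that the coefficient of
\[
M \;:=\; z_1^{p-1} \prod_{i=1}^{n-1} z_{2i}^{\,2} \prod_{i=1}^{n-1} z_{2i+1}^{\,p-2}
\]
in $f_{n-1} f_n^{p-1}$ equals $(-1)^{n+1}$ modulo $p$ (this being the coefficient of $\prod z_i^{p-1}$ in the LHS after dividing out the monomial correction factor).

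The plan is to extract this coefficient by peeling off the variables $z_{2n-1}$ and $z_{2n-2}$ in succession. Since $z_{2n-1}$ occurs only in the $(n,n)$-entry of $Z_n$, I decompose $f_n = z_{2n-1} f_{n-1} + g_n$, where $g_n$ collects the permanent terms from permutations $\sigma \in S_n$ with $\sigma(n) \neq n$ and involves no $z_{2n-1}$. Expanding $f_n^{p-1}$ by the binomial theorem and matching the factor $z_{2n-1}^{p-2}$ of $M$ forces the summand index $k = p-2$, contributing the scalar $\binom{p-1}{p-2} \equiv -1 \pmod p$ and reducing the task to computing the coefficient of $M/z_{2n-1}^{p-2}$ in $g_n f_{n-1}^{p-1}$. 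Next, the variable $z_{2n-2}$ occurs only at the two cells $(n-1,n)$ and $(n,n-1)$ of $Z_n$, so a permanent term contains $z_{2n-2}^2$ precisely when $\sigma(n-1)=n$ and $\sigma(n)=n-1$; summing over the $(n-2)!$ completions of such a $\sigma$ on $\{1,\dots,n-2\}$ identifies the $z_{2n-2}^2$-coefficient of $g_n$ (viewed as a polynomial in $z_{2n-2}$) with $f_{n-2}$. Since $f_{n-1}$ uses only the variables $z_1,\dots,z_{2n-3}$, extracting the $z_{2n-2}^2$-part of $g_n f_{n-1}^{p-1}$ identifies the remaining computation with the coefficient of
\[
M'' \;:=\; z_1^{p-1} \prod_{i=1}^{n-2} z_{2i}^{\,2} \prod_{i=1}^{n-2} z_{2i+1}^{\,p-2}
\]
in $f_{n-2} f_{n-1}^{p-1}$.

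Applied at $n-1$, the same degree/uniqueness observation shows that the induction hypothesis translates exactly into the statement that this last coefficient equals $(-1)^n$ modulo $p$. Combined with the $-1$ from the binomial step, I obtain $(-1)^{n+1}$, completing the induction. The main technical point I anticipate is the identification of the $z_{2n-2}^2$-coefficient of $g_n$ with $f_{n-2}$, which rests on recognising that $(n-1,n)$ and $(n,n-1)$ are the only cells of $Z_n$ holding $z_{2n-2}$ and carefully separating out the permutations that use both; once this is established, the rest of the argument is bookkeeping with the binomial expansion and the recursive structure of the Hankel permanent.
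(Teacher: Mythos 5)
Your proof is correct and follows essentially the same route as the paper's: the same degree argument forcing the reduced form to be a scalar multiple of $\prod z_i^{p-1}$, the same decomposition $f_n = z_{2n-1}f_{n-1} + g_n$ via expansion along the last row, the same binomial-coefficient bookkeeping isolating $k=p-2$, and the same identification of the $z_{2n-2}^2$-coefficient of $g_n$ with $f_{n-2}$ feeding the induction. The only difference is cosmetic, namely phrasing the argument as extraction of the coefficient of $\prod z_i^{p-1}$ rather than repeated reduction modulo $(z_1^p,\dots,z_{2n-1}^p)$.
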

\begin{proof}
    In this proof, we will consider all elements modulo $(z_1^p,z_2^p,\dots, z_{2n-1}^p)$. Set
    \[
    F_n\coloneqq \left(f_{n-1}f_{n}^{p-1}\right) \left(\prod_{i=1}^{n-1} z_{2i+1} \right)\left(\prod_{i=1}^{n-1} z_{2i}\right)^{p-3}.
    \]
    If $n=1$, then $F_n= f_1^{p-1} = z_1^{p-1}$, as desired. We can now assume that $n\geq 2$, and by induction, assume that $F_{n-1} = (-1)^{n} \left( \prod_{i=1}^{2n-3} z_i \right)$. We have
    \begin{align*}
        \deg F_n &= \deg \left(f_{n-1}f_{n}^{p-1}\right) \left(\prod_{i=1}^{n-1} z_{2i+1} \right)\left(\prod_{i=1}^{n-1} z_{2i}\right)^{p-3} \\
        &= (n-1) +\big( n(p-1)\big) + (n-1) + \big((n-1)(p-3)\big) \\
        &= (2n-1)(p-1). 
    \end{align*}
    Since there are exactly $2n-1$ variables, $F_n$ must be a multiple of $\left( \prod_{i=1}^{2n-1} z_i \right)$. In other words, we can ignore all other monomial terms that appear in the distribution of $F_n$. First we focus on $z_{2n-1}$, and apply the expansion formula using the last row:
    \begin{align*}
        F_n &= \left(f_{n-1}\right)\left(f_{n}^{p-1}\right) \left(\prod_{i=1}^{n-1} z_{2i+1} \right)\left(\prod_{i=1}^{n-1} z_{2i}\right)^{p-3}\\
        &= \left(f_{n-1}\right)\left( \sum_{i=1}^n z_{n+i-1} \perm\big(Z_n\big)_n^i  \right)^{p-1} \left(\prod_{i=1}^{n-1} z_{2i+1} \right)\left(\prod_{i=1}^{n-1} z_{2i}\right)^{p-3}\\
        &= \left(f_{n-1}\right)\left(  z_{2n-1} f_{n-1} + G \right)^{p-1} \left(\prod_{i=1}^{n-1} z_{2i+1} \right)\left(\prod_{i=1}^{n-1} z_{2i}\right)^{p-3},
    \end{align*}
    where $G\coloneqq \sum_{i=1}^{n-1} z_{n+i-1} \perm\big(Z_n\big)_n^i $ is a polynomial that does not involve $z_{2n-1}$. Note that $f_{n-1}$ also does not involve $z_{2n-1}$. Thus, there is only one term that matters in $\left(  z_{2n-1} f_{n-1} + G \right)^{p-1}$. We then have
    \begin{align*}
        F_n&=  \left(f_{n-1}\right)\left( \binom{p-1}{p-2} \left( z_{2n-1} f_{n-1}\right)^{p-2} (G) \right) \left(\prod_{i=1}^{n-1} z_{2i+1} \right)\left(\prod_{i=1}^{n-1} z_{2i}\right)^{p-3}\\
        &=\left(-z_{2n-1}^{p-1}\right) \left(f_{n-1}^{p-1}\right)\left(  G \right) \left(\prod_{i=1}^{n-2} z_{2i+1} \right)\left(\prod_{i=1}^{n-1} z_{2i}\right)^{p-3}.
    \end{align*}
    Next we look at the variable $z_{2n-2}$. In the above product, except for $z_{2n-2}^{p-3}$, the only polynomial that can contribute $z_{2n-2}^2$ is $G$. We notice that $G$ is the sum of products of a variable and a subpermanent of $Z_n$, and $Z_n$ has exactly two $z_{2n-2}$ as entries. Thus, there is exactly one term in $G$ that matters:
    \begin{align*}
        F_n&=\left(-z_{2n-1}^{p-1}\right) \left(f_{n-1}^{p-1}\right)\left( \sum_{i=1}^{n-1} z_{n+i-1} \perm\big(Z_n\big)_n^i   \right) \left(\prod_{i=1}^{n-2} z_{2i+1} \right)\left(\prod_{i=1}^{n-1} z_{2i}\right)^{p-3}\\
        &= \left(-z_{2n-1}^{p-1}\right) \left(f_{n-1}^{p-1}\right)\left( z_{2n-2} \perm \left( \big(Z_n\big)_n^{n-1} \right)   \right) \left(\prod_{i=1}^{n-2} z_{2i+1} \right)\left(\prod_{i=1}^{n-1} z_{2i}\right)^{p-3},
    \end{align*}
    and as mentioned, the only term that matters in $\perm \left( \big(Z_n\big)_n^{n-1} \right)$ is the one that contains $z_{2n-2}$:
    \begin{align*}
        F_n&=\left(-z_{2n-1}^{p-1}\right) \left(f_{n-1}^{p-1}\right)\left( z_{2n-2}^2 \perm \left( \big(Z_n\big)_n^{n-1} \right)^{n-1}_{n-1}   \right) \left(\prod_{i=1}^{n-2} z_{2i+1} \right)\left(\prod_{i=1}^{n-1} z_{2i}\right)^{p-3}\\
        &=\left(-z_{2n-1}^{p-1}\right) \left(f_{n-1}^{p-1}\right)\left( z_{2n-2}^2 f_{n-2}  \right) \left(\prod_{i=1}^{n-2} z_{2i+1} \right)\left(\prod_{i=1}^{n-1} z_{2i}\right)^{p-3}\\
        &=-\left(z_{2n-2}z_{2n-1}\right)^{p-1} \left(f_{n-2} f_{n-1}^{p-1}\right) \left(\prod_{i=1}^{n-2} z_{2i+1} \right)\left(\prod_{i=1}^{n-2} z_{2i}\right)^{p-3}.
    \end{align*}
    Observe that $F_{n-1}$ has appeared. Thus we can use the induction hypothesis:
    \begin{align*}
        F_n &=  -\left(z_{2n-2}z_{2n-1}\right)^{p-1} (F_{n-1})\\
        &=-\left(z_{2n-2}z_{2n-1}\right)^{p-1}  \left( (-1)^{n} \left( \prod_{i=1}^{2n-3} z_i \right)^{p-1} \right)\\
        &= (-1)^{n+1} \left( \prod_{i=1}^{2n-1} z_i \right)^{p-1},
    \end{align*}
    as desired.
\end{proof}

We are in a position to prove the main theorem of the section.

\begin{theorem}\label{thm:square-Hankel-F-regular}
    Let $\Bbbk$ be a field of positive characteristic $p>2$ and $Z_n$ an $n\times n$ Hankel matrix of indeterminates. Then the ring $\Bbbk[Z]/P_n(Z_n)$ is $F$-regular.
\end{theorem}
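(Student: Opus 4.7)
The plan is to combine Fedder's and Glassbrenner's criteria with the colon-capture identity already proved in Lemma~\ref{lem:ideal-membership-square-Hankel}. By Lemma~\ref{lem:square-Hankel-domain}, the polynomial $f_n \coloneqq \perm(Z_n)$ is irreducible, so $I \coloneqq P_n(Z_n) = (f_n)$ is a principal prime ideal of $S \coloneqq \Bbbk[z_1,\dots,z_{2n-1}]$ generated by a regular sequence of length one. Writing $\mathfrak{m}$ for the homogeneous maximal ideal of $S$, we therefore have $(I^{[q]} :_S I) = (f_n^{q-1})$ for every $q = p^e$. Consequently, the two criteria reduce to the single-element conditions $f_n^{p-1} \notin \mathfrak{m}^{[p]}$ (for $F$-purity) and $c\, f_n^{q-1} \notin \mathfrak{m}^{[q]}$ for some $q$ and some test element $c$ (for $F$-regularity).

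First I would establish $F$-purity via Fedder's criterion (Theorem~\ref{thm:Fedder}). Lemma~\ref{lem:ideal-membership-square-Hankel} gives
\[
f_{n-1}\, f_n^{p-1} \cdot \left(\prod_{i=1}^{n-1} z_{2i+1}\right)\left(\prod_{i=1}^{n-1} z_{2i}\right)^{p-3} \equiv (-1)^{n+1}\left(\prod_{i=1}^{2n-1} z_i\right)^{p-1} \pmod{\mathfrak{m}^{[p]}},
\]
and the right-hand side lies outside $\mathfrak{m}^{[p]}$ since each $z_i$ appears with exponent $p-1 < p$. In particular $f_n^{p-1} \notin \mathfrak{m}^{[p]}$, so $R \coloneqq \Bbbk[Z_n]/I$ is $F$-pure. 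Note that the hypothesis $p > 2$ is used precisely to make the exponent $p-3$ in the auxiliary monomial non-negative.

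Next I would promote $f_{n-1}$ to a test element for $R$. Because $z_{2n-1}$ appears only as the $(n,n)$-entry of $Z_n$, expansion along the last row gives $\partial f_n / \partial z_{2n-1} = \perm(Z_{n-1}) = f_{n-1}$, so by the Jacobian criterion the localization $R_{f_{n-1}}$ is regular. Since $\deg f_{n-1} = n-1 < n = \deg f_n$ and $f_n$ is irreducible, $f_{n-1} \notin (f_n)$, hence $f_{n-1}$ is a non-zerodivisor on $R$. The ring $R$ is $F$-finite (as a finitely generated algebra over the algebraically closed field $\Bbbk$) and $F$-pure by the previous step, so Lemma~\ref{lem:testelement} applies and $f_{n-1}$ is a test element.

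Finally, Glassbrenner's criterion (Theorem~\ref{thm:Glassbrenner}) requires $f_{n-1}\, f_n^{q-1} \notin \mathfrak{m}^{[q]}$ for some $q = p^e$, and this holds for $q = p$ by exactly the identity invoked in the $F$-purity step. Hence $R$ is $F$-regular. The substantive work is already concentrated in Lemma~\ref{lem:ideal-membership-square-Hankel}; once that inductive colon-capture identity is in hand, the theorem itself is a clean assembly of Fedder, Glassbrenner, the Jacobian criterion, and Lemma~\ref{lem:testelement}, and I do not anticipate additional obstacles beyond the careful bookkeeping that the lemma has already absorbed.
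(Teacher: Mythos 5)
Your proposal is correct and takes essentially the same approach as the paper: the test element $f_{n-1}$ is obtained via the Jacobian criterion together with Lemma~\ref{lem:testelement}, and the conclusion follows from Glassbrenner's criterion combined with Lemma~\ref{lem:ideal-membership-square-Hankel}. The one small variation is that the paper establishes $F$-purity by a direct initial-term computation for $f_n^{p-1}$ under the diagonal monomial order $z_1 > z_2 > \cdots > z_{2n-1}$, whereas you deduce it as an immediate corollary of Lemma~\ref{lem:ideal-membership-square-Hankel}; both are valid, and your route is marginally more economical.
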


\begin{proof}
    The ideal $P_n(Z_n)$ is generated by $f_n$. Consider the diagonal monomial order $z_1>z_2>\cdots>z_{2n-1}$. With respect to this monomial order, we have $\initial(f_n)=\prod_{i=1}^{n}z_{2i-1}$. We then have \[\initial(f_n^{p-1}) = \initial(f_n)^{p-1} =\prod_{i=1}^{n}z_{2i-1}^{p-1} \notin (z_1^p,z_2^p,\dots, z_{2n-1}^p),\] 
    and thus $\Bbbk[Z_n]/(f_n)$ is $F$-pure by Fedder's criterion (Theorem~\ref{thm:Fedder}). We will show that $\Bbbk[Z_n]/(f_n)$ is in fact $F$-regular using Glassbrenner's criterion. By the Jacobian criterion (see, e.g., \cite[Corollary 16.20]{Eisenbud}), the polynomial $\frac{\partial f_n}{\partial z_{2n-1}}=f_{n-1}$ is in the singular locus of $\Bbbk[Z_n]/(f_n)$. Couple this with the fact that $\Bbbk[Z_n]/(f_n)$ is a domain (Lemma~\ref{lem:square-Hankel-domain}), the polynomial $f_{n-1}$ is a test element by Corollary~\ref{lem:testelement}. By Glassbrenner's criterion, to show that $\Bbbk[Z_n]/(f_n)$ is $F$-regular, it suffices to show that $f_{n-1}f_n^{p-1}\notin (z_1^p,z_2^p,\dots, z_{2n-1}^p)$, which follows immediately from Lemma~\ref{lem:ideal-membership-square-Hankel}, as~desired.
\end{proof}

In the case of a square generic or symmetric matrix, an analog follows as a corollary.

\begin{theorem}\label{thm:square-generic-symmetric-F-regular}
    Let $\Bbbk$ be a field of positive characteristic $p>2$ and $X$ an $n\times n$ generic or symmetric matrix of indeterminates. Then the ring $\Bbbk[X]/P_n(X)$ is $F$-regular. 
\end{theorem}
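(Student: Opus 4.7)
The plan is to deduce both cases from the already-established Hankel case (Theorem~\ref{thm:square-Hankel-F-regular}) via Gorenstein deformation (Theorem~\ref{thm:fregdeform}). The hypersurfaces $\Bbbk[X]/(\perm(X))$ and $\Bbbk[Y]/(\perm(Y))$ are both Gorenstein (being complete intersections of one element in a polynomial ring), and both admit $\Bbbk[Z_n]/(\perm(Z_n))$ as a linear specialization obtained by identifying entries lying on a common antidiagonal.

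Concretely, in the generic case, I would introduce the ring map $\phi: \Bbbk[X] \to \Bbbk[Z_n]$ sending $x_{ij} \mapsto z_{i+j-1}$. Its kernel is generated by the $(n-1)^2$ linearly independent forms $x_{ij} - x_{i+1, j-1}$ for $1 \le i \le n-1$, $2 \le j \le n$. Since $\phi(\perm(X)) = \perm(Z_n)$, these linear forms descend modulo $(\perm(X))$ to a system of generators whose quotient is exactly $\Bbbk[Z_n]/(\perm(Z_n))$. A dimension count shows they form a homogeneous system of parameters on the Cohen--Macaulay hypersurface $\Bbbk[X]/(\perm(X))$: the latter has dimension $n^2 - 1$, while the final quotient has dimension $2n - 2 = \dim \Bbbk[Z_n]/(\perm(Z_n))$, matching the expected drop of $(n-1)^2$. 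Hence they form a regular sequence. Applying Theorem~\ref{thm:fregdeform} one linear form at a time (each intermediate quotient by a prefix of a regular sequence on a Gorenstein ring remains Gorenstein), $F$-regularity lifts from $\Bbbk[Z_n]/(\perm(Z_n))$ all the way up to $\Bbbk[X]/(\perm(X))$.

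The symmetric case runs identically via $\psi: \Bbbk[Y] \to \Bbbk[Z_n]$, $y_{ij} \mapsto z_{i+j-1}$, whose kernel is generated by $\binom{n-1}{2} = \binom{n+1}{2} - (2n-1)$ linearly independent linear forms identifying pairs of symmetric entries lying on a common antidiagonal of length at least two. The same dimension count ($\dim \Bbbk[Y]/(\perm(Y)) = \binom{n+1}{2} - 1$, quotient dimension $2n-2$) together with Cohen--Macaulayness shows these are a regular sequence on the hypersurface, and Theorem~\ref{thm:fregdeform} again lifts $F$-regularity from the Hankel case. The only substantive point to verify is that the chosen linear forms form a regular sequence on the hypersurface, and in both cases this is immediate from the dimension check combined with Cohen--Macaulayness.
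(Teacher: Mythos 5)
Your proposal is correct and follows essentially the same route as the paper: specialize the generic (resp.\ symmetric) matrix to the Hankel matrix by identifying entries on common antidiagonals, use the dimension count together with Cohen--Macaulayness of the hypersurface to conclude the identifying linear forms are a regular sequence, and then lift $F$-regularity from $\Bbbk[Z_n]/(\perm(Z_n))$ via Gorenstein deformation (Theorem~\ref{thm:fregdeform}). The paper uses a slightly different but equivalent choice of specializing linear forms (citing \cite{CONCA2018111}); otherwise the two arguments coincide.
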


\begin{proof} 
    We claim that the generic and symmetric permanental hypersurfaces are a deformation of the corresponding Hankel one.  Assume that $X$ is generic. We will use a specialization in \cite[page 112]{CONCA2018111}. Let $\mathbf{x}$ be the set of linear forms
	\[
	\{x_{1k}-x_{1+l,\ k-l}\mid 1\leq l< k\leq n\} \cup \{x_{k,\ n}-x_{l,\ n+k-l}\mid 2\leq k<l< n  \}
	\]
	of $\Bbbk[X]$. We then have the following isomorphism
	\[
	\Bbbk[X]/ \big( P_n(X)+ ( \mathbf{x})\big) \cong \Bbbk[Z_n]/(P_n(Z_n)),
	\]
	and 
	\begin{align*}
		\dim(\Bbbk[X]/P_n(X)) -\dim( \Bbbk[Z_n]/P_n(Z_n)) &= (n^2-1)-(2n-2)\\
		&=(n-1)^2=|\mathbf{x}|.
	\end{align*}
	In particular, this means that the ring $\Bbbk[Z_n]/P_n(Z_n)$, where $Z_n$ is a Hankel matrix, is isomorphic to $\Bbbk[X]/P_n(X)$, where $X$ is a generic matrix, modulo a regular sequence. The claim for the generic permanental hypersurface then follows. Similar arguments apply to the symmetric case, where the specialization is by setting the variables on each antidiagonal to be equal.
    Since $F$-regularity deforms for Gorenstein rings (\cite[Theorem 4.2]{HH94a}), the result then follows from Theorem~\ref{thm:square-Hankel-F-regular}. 
\end{proof}

By a theorem of Smith (\cite[Theorem 4.3]{Sm97}), we obtain the following.

\begin{corollary}\label{cor:square-rational-singularities}
    Let $\Bbbk$ be a field of characteristic $0$ and $X$ an $n\times n$ generic/symmetric/Hankel matrix of indeterminates. Then the ring $\Bbbk[X]/P_n(X)$ has rational singularities.
\end{corollary}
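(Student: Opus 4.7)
The plan is to apply Smith's descent theorem \cite[Theorem 4.3]{Sm97}, which asserts that a finitely generated algebra over a field of characteristic~$0$ has rational singularities whenever its mod-$p$ reductions are (strongly) $F$-regular for a Zariski-dense set of primes~$p$. The prior two theorems already do all the real work in positive characteristic, so this corollary is purely a reduction-mod-$p$ matter.

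First I would set up the integral model. Since $P_n(X)$ is generated by the permanent $\perm(X)$, a polynomial whose coefficients are all equal to $1$, the ring $R_\mathbb{Z}\coloneqq \mathbb{Z}[X]/P_n(X)$ is a well-defined finitely generated $\mathbb{Z}$-algebra. Base change along $\mathbb{Z}\hookrightarrow \Bbbk$ recovers $\Bbbk[X]/P_n(X)$, while the fiber over a closed point $(p)\in \Spec \mathbb{Z}$ is $\mathbb{F}_p[X]/P_n(X)$. Thus the reduction-mod-$p$ model is completely transparent: the same combinatorial presentation works uniformly over every prime.

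Next, I would invoke Theorems~\ref{thm:square-Hankel-F-regular} and \ref{thm:square-generic-symmetric-F-regular} to conclude that $\mathbb{F}_p[X]/P_n(X)$ is $F$-regular for every prime $p>2$, in each of the three matrix shapes under consideration. Since the set of primes $p>2$ is cofinite in $\Spec \mathbb{Z}$, and hence Zariski-dense, Smith's theorem immediately yields that $\Bbbk[X]/P_n(X)$ has rational singularities.

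The only genuine subtlety, which is minor, is descending from a general characteristic-$0$ field $\Bbbk$ to a finitely generated $\mathbb{Z}$-subalgebra; here there is nothing to worry about because all defining coefficients lie in $\mathbb{Z}$ itself, so no auxiliary spreading out of the base field is needed. The conclusion for arbitrary $\Bbbk$ of characteristic~$0$ follows either directly from the formulation of Smith's theorem or by the faithfully flat extension $\mathbb{Q}\hookrightarrow \Bbbk$ combined with the result over~$\mathbb{Q}$.
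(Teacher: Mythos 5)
Your proposal is correct and follows exactly the approach the paper takes: the paper's entire ``proof'' is the single sentence ``By a theorem of Smith (\cite[Theorem 4.3]{Sm97}), we obtain the following,'' and you have simply spelled out the standard reduction-mod-$p$ bookkeeping (integral model over $\mathbb{Z}$, density of the primes $p>2$, descent from $\mathbb{Q}$ to $\Bbbk$) that Smith's theorem packages. Nothing is missing and nothing is different in substance.
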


Unlike their determinantal counterparts, permanental ideals are rarely prime, and hence rarely define $F$-regular rings in prime characteristic $p\neq 2$. We record some results below that support this argument:
\begin{enumerate}
    \item if $X$ is a generic/symmetric/Hankel matrix, then $P_2(X)$ is prime if and only if $X$ is a $2\times 2$ matrix (Lemma~\ref{lem:three-element-in-permanent} and Theorem~\ref{thm:square-generic-symmetric-F-regular});
    \item if $X$ is a generic matrix, then $P_3(X)$ is prime if and only if $X$ is a $3\times 3$ matrix (\cite[Corollary~12 and Corollary~17]{Kirkup} and Theorem~\ref{thm:square-generic-symmetric-F-regular});
    \item if $X$ is a Hankel matrix, then $P_3(X)$ is prime if and only if $X$ is a $3\times 3$ matrix (\cite{GG06}).
\end{enumerate}

With this evidence, we propose the following conjecture:

\begin{conjecture}
    Let $X$ be an $m\times n$ generic/symmetric/Hankel matrix and $\Bbbk$ a field of characteristic $p\neq 2$. Then the ideal $P_t(X)$ of $\Bbbk[X]$ is prime if and only if $t=m=n$.
\end{conjecture}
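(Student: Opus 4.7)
The plan is to prove the two directions of the conjecture separately; the forward direction will follow essentially for free from the earlier results of this paper.

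For the ``if'' direction, when $t=m=n$ the ideal $P_t(X)=(\perm(X))$ is principal. By Theorems~\ref{thm:square-Hankel-F-regular} and \ref{thm:square-generic-symmetric-F-regular}, the quotient $\Bbbk[X]/P_t(X)$ is $F$-regular, and by Theorem~\ref{thm:properties-F}(3) every $F$-regular $\mathbb{N}$-graded ring is a normal domain. Hence $P_t(X)$ is prime in all three matrix types.

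For the ``only if'' direction I would argue by contrapositive: assuming $t\leq \min(m,n)$ with $(t,m,n)\neq (t,t,t)$, I would exhibit a monomial in $P_t(X)$ that is a nontrivial product of variables, none of which individually lies in $P_t(X)$. Because $P_t(X)$ is homogeneous of initial degree $t\geq 2$, no single variable can be in the ideal, so producing such a monomial immediately contradicts primeness. For $t=2$, Lemma~\ref{lem:three-element-in-permanent} yields exactly such a product whenever the matrix has at least three rows or three columns; for Hankel $Z$ with $n\geq 3$ and $t=2$, the description of $\sqrt{P_2(Z)}$ as a squarefree monomial ideal recorded in \cite{GGS07} supplies one as well. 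For $t\geq 3$ and $X$ generic, Kirkup's \cite[Corollary~6]{Kirkup} provides analogous monomial containments in $P_t(X)$.

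The hard part will be handling $t\geq 3$ for symmetric and Hankel matrices, where the Kirkup-style argument is not directly available. The difficulty is that the specialization identifying $y_{ij}$ with $y_{ji}$, or imposing the Hankel relations $z_{ij}=z_{i+k,\,j-k}$, can collapse some of the monomial products on which the generic argument relies, either destroying the inequality ``factor variable $\notin P_t$'' or killing the monomial outright. I would try to repeat the combinatorial cofactor-expansion reasoning of \cite{LS00,Kirkup}, but restrict attention to rows and columns whose entries remain pairwise distinct after specialization, essentially embedding a generic-looking submatrix inside the symmetric or Hankel one. A separate subtlety is the maximal rectangular case $t=m<n$ (or its Hankel counterpart), since there the $t\times t$ submatrices of interest are already square: rather than invoking Lemma~\ref{lem:three-element-in-permanent} directly, I would construct a monomial in $P_t(X)$ by an explicit cofactor expansion along a row or column, in the spirit of the computation carried out in Lemma~\ref{lem:ideal-membership-square-Hankel}.
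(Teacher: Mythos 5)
The statement you are trying to prove is presented in the paper as a conjecture, not a theorem: the paper offers no proof, only three pieces of supporting evidence (the enumerated items preceding the conjecture). So there is no ``paper's proof'' to compare against, and your attempt should be judged as an attempt on an open problem.

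Your ``if'' direction is correct and matches the paper's stated evidence: for $t=m=n$ the ideal is principal, Theorems~\ref{thm:square-Hankel-F-regular} and~\ref{thm:square-generic-symmetric-F-regular} give $F$-regularity in characteristic $p>2$, and Theorem~\ref{thm:properties-F}(3) then gives a normal domain, so $P_n(X)$ is prime. In characteristic $0$ you would instead invoke Corollary~\ref{cor:square-rational-singularities}; you omit this, but it is a minor patch.

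The ``only if'' direction is where the real gap lies, and you identify it yourself. The overall contradiction scheme is sound: $P_t(X)$ sits in $\mathfrak{m}^t$ with $t\geq 2$, so it contains no variables, and any monomial contained in $P_t(X)$ therefore witnesses non-primeness. For generic matrices, Lemma~\ref{lem:three-element-in-permanent} ($t=2$) and Kirkup's monomial-containment results ($t\geq 3$) plausibly settle the matter, and the paper's evidence item (2) already records the $P_3$ case via Kirkup's Corollaries 12 and 17. But for symmetric and Hankel matrices with $t\geq 3$ there is no known analog of Kirkup's containments, and your proposed remedy---restrict to rows and columns whose entries stay pairwise distinct, so as to ``embed a generic-looking submatrix''---is a heuristic, not an argument. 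After the symmetric identification $y_{ij}=y_{ji}$ or the Hankel identification $z_{ij}=z_{i+k,\,j-k}$, a cofactor-expansion identity that produced a monomial in $P_t(X)$ for a generic matrix can instead produce a binomial, or a monomial whose repeated factors wreck the cancellation that the generic proof depends on, and you offer no mechanism to rule this out, nor to handle the boundary case $t=m<n$ for $t\geq 3$. This combinatorial collapse is precisely the difficulty that keeps the statement a conjecture; your outline records the right obstructions but does not overcome them.
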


\section{The $F$-singularities of algebras defined by $2\times 2$ permanents of generic matrices}\label{sec:generic}

In this section we will determine the $F$-singularities of the coordinate rings $\Bbbk[X]/\sqrt{P_2(X)}$ where $\Bbbk$ is a field of positive characteristic $p> 2$ and $X$ is a $m\times n$ generic matrix of indeterminates where $m,n\geq 2$. We remark that since $F$-pure rings are reduced (Theorem \ref{thm:properties-F}), the ring $\Bbbk[X]/P_2(X)$ would not even be $F$-pure if $P_2(X)$ is not radical. Thus we study $\Bbbk[X]/\sqrt{P_2(X)}$. The algebraic properties of these rings were studied in detail in \cite{LS00}. We recall the following result that will help us showing the $F$-purity of the algebra defined by an ideal by looking at those defined by its minimal primes.

\begin{lemma}[{\cite[Lemma 5.5]{PTW23}}]\label{lem:intersection}
    Let $I$ and $J$ be ideals of a polynomial ring $S$ in finitely many variables over a field of prime characteristic $p>0$. Then
	\[
	(I^{[p]}:_S I) \cap (J^{[p]}:_S J) \subseteq (I\cap J)^{[p]}:_S (I\cap J).
	\]
\end{lemma}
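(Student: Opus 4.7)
The plan is to reduce the containment to the \emph{Frobenius--intersection identity}
\[
I^{[p]} \cap J^{[p]} = (I \cap J)^{[p]},
\]
which in turn will follow from flatness of Frobenius. First I would fix any $f$ in the intersection $(I^{[p]}:_S I) \cap (J^{[p]}:_S J)$ and any $g \in I \cap J$; the definitions of the two colon ideals immediately give $fg \in I^{[p]}$ and $fg \in J^{[p]}$, hence $fg \in I^{[p]} \cap J^{[p]}$. Granted the identity above, this forces $fg \in (I \cap J)^{[p]}$, and letting $g$ range over $I \cap J$ yields $f \cdot (I \cap J) \subseteq (I \cap J)^{[p]}$, i.e.\ $f \in \big( (I \cap J)^{[p]} :_S (I \cap J) \big)$, which is the containment we want.

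To verify $I^{[p]} \cap J^{[p]} = (I \cap J)^{[p]}$, I would interpret $(-)^{[p]}$ as the extension of ideals along the Frobenius map $F \colon S \to S$: for any ideal $K$ of $S$, the ideal $K^{[p]}$ is generated by $\{k^p : k \in K\} = F(K)$. By Kunz's theorem, since $S = \Bbbk[x_1,\dots, x_n]$ is a regular Noetherian ring of characteristic $p$, the Frobenius $F$ is flat. I would then tensor the short exact sequence
\[
0 \to I \cap J \to I \oplus J \to I + J \to 0,
\]
where the maps are $x \mapsto (x,x)$ and $(x,y) \mapsto x-y$, with $S$ viewed as an $S$-module via $F$. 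Flatness preserves exactness of this sequence, and after identifying $K \otimes_S S$ with $K^{[p]}$ inside $S$ (which is valid precisely because $F$ is flat, so the natural map $K \otimes_S S \to S$ is injective), one reads off $(I \cap J)^{[p]} = I^{[p]} \cap J^{[p]}$.

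The only non-elementary ingredient is Kunz's theorem, which is classical, so I do not anticipate a genuine obstacle; the rest of the argument is a direct unwinding of the definitions of colon and Frobenius powers. The one place to be slightly careful is the identification $K \otimes_S S \cong K^{[p]}$, which must use flatness to get injectivity and is not merely formal.
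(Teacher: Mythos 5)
The paper does not give its own proof of this lemma; it cites \cite[Lemma~5.5]{PTW23} and uses the statement as a black box, so there is no internal argument to compare against. Your proof is correct. The reduction to the single identity $I^{[p]} \cap J^{[p]} = (I\cap J)^{[p]}$ is exactly the right move: once one has that, the colon containment is a one-line unwinding of definitions, just as you wrote. Your verification of the identity---tensoring the canonical short exact sequence
\[
0 \to I\cap J \to I \oplus J \to I+J \to 0
\]
with $S$ along the Frobenius $F$, invoking Kunz's theorem that $F$ is flat on a regular Noetherian ring of characteristic $p$ to preserve exactness, and identifying $K \otimes_S {}^F\!S$ with $K^{[p]} \subseteq S$---is the standard argument for this fact and is carried out correctly. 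You are also right to single out the identification $K \otimes_S {}^F\!S \cong K^{[p]}$ as the step that genuinely requires flatness (to guarantee the natural map to $S$ is injective, not merely surjective onto $K^{[p]}$); that is indeed the only place where the hypothesis on $S$ is used, and in fact your argument proves the lemma for any regular Noetherian ring of characteristic $p$, a mild strengthening of the stated polynomial-ring case.
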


Essentially, to show that  $\Bbbk[X]/\sqrt{P_2(X)}$ is $F$-pure, it suffices to find an element $f$ such that for any minimal prime $P$ of $P_2(X)$, we have $f\in (P^{[p]}:_{\Bbbk[X]} P) \setminus \mathfrak{m}^{[p]}$, where $\mathfrak{m}$ denote the homogeneous maximal ideal of $\Bbbk[X]$. We recall a result about the minimal primes of $P_2(X)$.

\begin{lemma}[{\cite[Theorem 4.1]{LS00}}]\label{lem:generic-minimal-primes}
    An ideal $P$ is a minimal prime of $P_2(X)$ if and only if one of the following holds:
    \begin{enumerate}
        \item $P$ is generated by the permanent of one $2\times 2$ submatrix of $X$ and all the entries
        of $X$ outside of this submatrix;
        \item $P$ is generated by all the indeterminates in $m - 1$ of the rows of $X$ (if $n\geq 3$);
        \item $P$ is generated by all the indeterminates in $n - 1$ of the columns of $X$ (if $m\geq 3$).
    \end{enumerate}
\end{lemma}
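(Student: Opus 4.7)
The plan is to establish both directions of the biconditional. \emph{Direction (i):} each listed ideal is a minimal prime of $P_2(X)$. \emph{Direction (ii):} every minimal prime of $P_2(X)$ appears on the list.

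For (i), I would first verify primality. A type (1) ideal $P$ has quotient $\Bbbk[X]/P \cong \Bbbk[a,b,c,d]/(ad+bc)$, a domain because $ad+bc$ is an irreducible quadric. Types (2) and (3) have polynomial-ring quotients. Next I would check $P_2(X)\subseteq P$: for type (1), every $2\times 2$ submatrix of $X$ either coincides with the distinguished one (its permanent is a generator of $P$) or contains an entry outside it (in which case each term of the permanent vanishes); for types (2) and (3), each monomial term of any $2\times 2$ permanent involves entries from two distinct rows (columns), at least one of which has been killed. Minimality over $P_2(X)$ then follows from the fact that the listed primes are pairwise incomparable---a short inspection, since any two of them differ in which variables they contain and in the presence of a permanent generator---so none of them properly contains another prime still containing $P_2(X)$.

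For (ii), let $P$ be an arbitrary minimal prime of $P_2(X)$ and define the \emph{support}
\[
V := \{(i,j) \colon x_{ij}\notin P\}.
\]
Minimality forces $V\neq\emptyset$, since otherwise $P=\mathfrak{m}$ strictly contains each listed prime. The key input is that Lemmas~\ref{lem:three-element-in-permanent} and~\ref{lem:three-element-in-permanent-2} place many three-term monomials into $P_2(X)\subseteq P$; primality of $P$ then forces at least one factor of each such monomial into $P$. This severely restricts $V$: it cannot simultaneously meet three distinct columns and two distinct rows, nor three distinct rows and two distinct columns. Consequently $V$ must be confined to a single row (yielding type (2)), a single column (yielding type (3)), or a $2\times 2$ sub-grid. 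In the final case the relation $x_{ij}x_{kl}+x_{il}x_{kj}\in P$ coming from that block, combined with the fact that every entry outside the block is already in $P$, pins down $P$ as a type (1) prime; minimality then prevents any additional generator from appearing.

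The main obstacle is the case analysis in (ii), especially the small-matrix boundary when $\min(m,n)=2$: there Lemma~\ref{lem:three-element-in-permanent-2} is vacuous and Lemma~\ref{lem:three-element-in-permanent} applies only in one direction, so one must argue directly that a support spanning two rows and two columns still lies in a $2\times 2$ block. A secondary subtlety is ensuring, in the $2\times 2$-support case, that $P$ is generated by \emph{exactly} the permanent of the block together with the outside variables---neither more nor less---which uses minimality of $P$ to rule out extra generators that would otherwise strictly enlarge $P$ beyond a type (1) prime.
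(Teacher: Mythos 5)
The paper does not prove this lemma; it simply cites \cite[Theorem 4.1]{LS00}, so there is no in-paper argument to compare yours against. Judged on its own, your outline follows the natural route (and the same spirit as the cited source): use Lemmas~\ref{lem:three-element-in-permanent} and~\ref{lem:three-element-in-permanent-2} together with primality to constrain the set $V$ of variables surviving in $P$, and then invoke minimality to identify $P$ with a listed prime. The verification of primality of the listed ideals and of $P_2(X)\subseteq P$ is correct, and the reduction in direction (ii) — that $V$ cannot meet three columns and two rows, three rows and two columns, or a diagonal triple — does force $V$ into a single row, a single column, or a $2\times2$ block. The boundary cases you flag are real but navigable along the lines you indicate.

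There is, however, a genuine gap in direction (i). Pairwise incomparability of the listed primes does \emph{not} by itself show that each one is minimal over $P_2(X)$: a priori there could be an unlisted prime $Q$ with $P_2(X)\subseteq Q\subsetneq P$. Your sentence ``so none of them properly contains another prime still containing $P_2(X)$'' is exactly the non sequitur. Incomparability only rules out containment of \emph{listed} primes in one another; to conclude minimality you either need direction (ii) already in hand (then any $Q$ below $P$ would be listed, and incomparability forces $Q=P$), or you need a direct check. A clean direct check is to localize: for a type~(2) prime $\mathfrak{p}=(x_{kj}: k\neq i)$ with $n\geq 3$, in $\Bbbk[X]_{\mathfrak{p}}$ every $x_{ij}$ is a unit, and Lemma~\ref{lem:three-element-in-permanent} puts $x_{ij}x_{ij'}x_{kj''}$ in $P_2(X)$ for $j,j',j''$ distinct and $k\neq i$, so $x_{kj''}\in P_2(X)_{\mathfrak{p}}$; hence $\mathfrak{p}_{\mathfrak{p}}\subseteq P_2(X)_{\mathfrak{p}}$ and $\mathfrak{p}$ is minimal. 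Analogous localizations handle types~(1) and~(3). As written, your proof of (i) silently depends on (ii); either reorganize so (ii) comes first, or replace the incomparability step with the localization argument.
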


Next we identify the desired element $f$. Let $\Omega$ denote the set of all $2\times 2$ submatrices of $X$. For each $\omega\in \Omega$, set $\omega =\begin{pmatrix}
	    a_\omega&b_\omega\\
        c_\omega&d_\omega
	\end{pmatrix}$. We will also use $\omega$ to denote the set $\{a_\omega, b_\omega, c_{\omega}, d_{\omega}\}$. Set
	\[
	f\coloneqq \prod_{i,j=1}^{n} x_{ij}^{p-1} + \sum_{\omega \in \Omega}\left( \left( \prod_{x_{ij}\notin \omega } x_{ij}^{p-1} \right) \sum_{k=0}^{p-2} (-1)^k (a_\omega d_\omega)^{2p-2-k}(b_\omega c_\omega)^k\right).
	\]	

\begin{lemma}\label{lem:element-F-pure-generic}
    For each minimal prime $P$ of $P_2(X)$, we have $f\in (P^{[p]}:_{\Bbbk[X]} P)$.
\end{lemma}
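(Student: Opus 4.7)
The plan is to verify $f \cdot g \in P^{[p]}$ for every generator $g$ of each of the three types of minimal prime listed in Lemma~\ref{lem:generic-minimal-primes}, by examining $f$ term by term. For convenience, write $u_\omega \coloneqq a_\omega d_\omega$, $v_\omega \coloneqq b_\omega c_\omega$, and $N_\omega \coloneqq \prod_{x_{ij} \notin \omega} x_{ij}^{p-1}$, so that
\[
f = \prod_{i,j} x_{ij}^{p-1} + \sum_{\omega \in \Omega} N_\omega \sum_{k=0}^{p-2} (-1)^k u_\omega^{2p-2-k} v_\omega^k.
\]
Observe that in each $(\omega, k)$-summand, the diagonal entries $a_\omega, d_\omega$ carry exponent $2p-2-k \geq p$ (using $k \leq p-2$), while $b_\omega, c_\omega$ carry exponent $k \leq p-2$, and each variable outside $\omega$ carries exponent $p-1$.

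For a type~(2) prime $P$, generated by all variables outside a single row $i_0$, I would argue each monomial of $f$ already lies in $P^{[p]}$ after multiplication by any $x_{ij} \in P$. The first term times $x_{ij}$ produces a factor $x_{ij}^p \in P^{[p]}$. For a monomial from the $\omega$-sum, the entries $a_\omega, d_\omega$ lie in two distinct rows of $X$, so at least one of them lies outside row $i_0$ and is a generator of $P$; since it occurs to power $\geq p$ in the monomial, the whole monomial is already in $P^{[p]}$. Type~(3) is handled symmetrically.

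For a type~(1) prime $P = \bigl(\perm(\omega_0),\ x_{ij} : x_{ij} \notin \omega_0\bigr)$, the same monomial analysis dispatches the variable generators: the only potential obstruction is that both $a_\omega$ and $d_\omega$ lie in $\omega_0$, but since they span two distinct rows and two distinct columns, this forces $\omega = \omega_0$, and then the variable $x_{ij} \notin \omega_0$ already appears to power $p-1$ in $N_\omega$, so multiplication produces $x_{ij}^p$. The delicate step is $f \cdot \perm(\omega_0) \in P^{[p]}$. Working modulo the ideal $\bigl(x_{ij}^p : x_{ij} \notin \omega_0\bigr)$, the same row/column argument shows that every $\omega \neq \omega_0$ summand vanishes, leaving only the leading term and the $\omega_0$-summand. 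Setting $u = u_{\omega_0}$ and $v = v_{\omega_0}$, this reduces the problem to showing
\[
(u+v)\left[(uv)^{p-1} + \sum_{k=0}^{p-2} (-1)^k u^{2p-2-k} v^k\right] \in \bigl((u+v)^p\bigr).
\]
By the telescoping identity $(u+v)\sum_{k=0}^{p-2}(-1)^k u^{p-1-k} v^k = u^p - uv^{p-1}$, which holds in any characteristic whenever $p-2$ is odd (automatic for $p > 2$), the cross terms cancel against $(uv)^{p-1}(u+v) = u^p v^{p-1} + u^{p-1} v^p$, and the bracketed product collapses to $u^{p-1}(u^p + v^p)$. In characteristic $p$, $u^p + v^p = (u+v)^p$, and the containment follows.

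The main obstacle is the algebraic identity underlying the choice of $f$: the alternating sum $\sum (-1)^k u^{2p-2-k} v^k$ is engineered precisely so that multiplication by $u+v$ telescopes and cancels the $u^p v^{p-1}$ produced by $(uv)^{p-1}(u+v)$, leaving exactly $u^{p-1}(u+v)^p$ in characteristic~$p$. Beyond this, the remaining work is bookkeeping: verifying that for $\omega \neq \omega_0$, at least one of $a_\omega, d_\omega$ lies outside $\omega_0$, which suffices to push the corresponding monomial into $P^{[p]}$ via its $p$-th power.
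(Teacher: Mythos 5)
Your proposal is correct. The global structure — run through the three types of minimal primes from Lemma~\ref{lem:generic-minimal-primes}, and in the type~(1) case isolate the $\omega_0$-summand — matches the paper's, but the execution differs in a genuine way. The paper uses the complete-intersection formula $(P^{[p]}:P)=(\text{product of generators})^{p-1}+P^{[p]}$ and then, via the identity $\binom{p-1}{k}\equiv(-1)^k\pmod p$, recognizes $g+f_{\omega'}$ as $(wz)^{p-1}\cdot\bigl(\prod_{x_{ij}\notin\omega'}x_{ij}^{p-1}\bigr)(wz+xy)^{p-1}$, a multiple of the colon generator. You instead bypass the colon formula entirely, checking $f\cdot g\in P^{[p]}$ directly for each generator $g$ of $P$ and reducing the only nontrivial check to $(u+v)\bigl[(uv)^{p-1}+\sum_{k=0}^{p-2}(-1)^k u^{2p-2-k}v^k\bigr]=u^{p-1}(u+v)^p$, proved by telescoping. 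These are two faces of the same identity (your telescoping computation times $(u+v)$ is the paper's binomial one), but your route makes more transparent \emph{why} the alternating sum was engineered as it was — the cancellation of $u^pv^{p-1}$ against $(uv)^{p-1}(u+v)$ is exactly what is needed to land in $((u+v)^p)$ — whereas the paper's is a cleaner one-line application of the binomial theorem mod $p$ that presupposes knowing the colon ideal explicitly. Your handling of the monomial bookkeeping (both diagonal entries of $\omega$ lie in $\omega_0$ only if $\omega=\omega_0$; in each $f_\omega$-summand the diagonal entries carry exponent $2p-2-k\geq p$) is exactly the paper's as well.
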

\begin{proof}
    Set
	\begin{align*}
		g&\coloneqq \prod_{i,j=1}^{n} x_{ij}^{p-1}, \text{ and}\\
		f_{\omega}&\coloneqq\left( \left( \prod_{x_{ij}\notin \omega } x_{ij}^{p-1} \right) \sum_{k=0}^{p-2} (-1)^k (a_\omega d_\omega)^{2p-2-k}(b_\omega c_\omega)^k\right),
	\end{align*}
    for each $\omega\in \Omega$. Then $f=g+\sum_{\omega\in \Omega} f_\omega$. 	Knowing all the minimal primes $P$ over $P_2(X)$ from Lemma~\ref{lem:generic-minimal-primes}, we will show that $f\in (P^{[p]}:_{\Bbbk[X]} P)$ for each of them.

    \textbf{Case 1:} $P$ is generated by the permanent of a submatrix $\omega'=\begin{pmatrix}
			w&x\\y&z
		\end{pmatrix}$ of $X$ and all the variables of $X$ outside of this submatrix.  It is clear that these generators of $P$ form a regular sequence on $\Bbbk[X]$. Hence
		\begin{align*}
			(P^{[p]}:_{\Bbbk[X]} P)&=\left( (wz+xy)^{p-1}\prod_{ x_{ij}\notin \omega' } x_{ij}^{p-1}\right) + P^{[p]}\\
			&=\left( (wz+xy)^{p-1}\prod_{ x_{ij}\notin \omega' } x_{ij}^{p-1} \right)+ \big((wz+xy)^p\big) +\big(x_{ij}^p\mid x_{ij}\neq w,x,y,z\big).
		\end{align*} 
		We have
		\begin{align*}
			g+f_{\omega'}&=\prod_{i,j=1}^{n} x_{ij}^{p-1} + \left( \prod_{x_{ij}\notin \omega'} x_{ij}^{p-1} \right) \sum_{k=0}^{p-2} (-1)^k (wz)^{2p-2-k}(xy)^k\\
			&=\left( \prod_{x_{ij}\notin \omega' } x_{ij}^{p-1} \right) \left(wz\right)^{p-1} \sum_{k=0}^{p-1} (-1)^k (wz)^{p-1-k}(xy)^k
		\end{align*}
        It is known that $\binom{p-1}{k}= (-1)^k \text{ (mod } p)$. Thus
        \begin{align*}
            		g+f_{\omega'}	&=\left(wz\right)^{p-1} \left( \prod_{x_{ij}\notin \omega'  } x_{ij}^{p-1} \right) \sum_{k=0}^{p-1} \binom{p-1}{k} (wz)^{p-1-k}(xy)^k\\
			&=\left(wz\right)^{p-1} \left( \prod_{x_{ij}\notin \omega'  } x_{ij}^{p-1} \right)(wz+xy)^{p-1}
        \end{align*}
		is in $(P^{[p]}:_{\Bbbk[X]} P)$. On the other hand, consider any submatrix $\omega\neq \omega'= \begin{pmatrix}
		w&x\\y&z
		\end{pmatrix}$ of $X$. This is equivalent to either $a_\omega\notin \omega'$ or $d_\omega\notin \omega'$. Moreover, each monomial summand of the polynomial
		\[
		f_{\omega}=\left( \prod_{x_{ij}\notin \omega } x_{ij}^{p-1} \right) \sum_{k=0}^{p-2} (-1)^k (a_\omega d_\omega)^{2p-2-k}(b_\omega c_\omega)^k
		\]
		contains either $a_\omega^p$ or $d_\omega^p$. Thus $f_{\omega}$ is in $(P^{[p]}:_{\Bbbk[X]} P)$ for each $\omega \neq \omega'$. Therefore
		\[
		f=g+\sum_{\omega \in \Omega} f_{\omega}
		\]
		is in $(P^{[p]}:_{\Bbbk[X]} P)$, as desired.

        \textbf{Case 2:} $P$ is generated by all the variables in some $n-1$ of the columns of $X$ (in this case we have $m>2$). By symmetry, we can assume that $P$ is generated by all the variables in the first $n-1$ columns of $X$. These generators of $P$ form a regular sequence on $\Bbbk[X]$. Hence
		\[
		(P^{[p]}:_{\Bbbk[X]} P) = \prod_{i\in [1,n],\ j\in [1,n-1] } x_{ij}^{p-1} + P^{[p]}.
		\]
		It is clear that $g$ is in $(P^{[p]}:_{\Bbbk[X]} P)$. On the other hand, consider any submatrix $\omega\in \Omega$. Then the variable $a_\omega$ is in the first $n-1$ columns of $X$. Thus, $f_{\omega}$, being divisible by $a_\omega^p$, is also in $(P^{[p]}:_{\Bbbk[X]} P)$. Therefore, 
		\[
		f=g+\sum_{\omega \in \Omega} f_{\omega}
		\]
		is in $(P^{[p]}:_{\Bbbk[X]} P)$, as desired.

        \textbf{Case 3:} $P$ is generated by all the variables in some $m-1$ of the rows of $X$ (in this case we have $n>2$). The proof is similar to that of the previous case by symmetry. This concludes the~proof.
\end{proof}

We are now ready to prove the main theorem of this section.

\begin{theorem}\label{thm:generic-Fpure}
    Let $\Bbbk$ be a field of positive characteristic $p>2$ and $X$ an $m\times n$ generic  matrix of indeterminates where $m,n\geq 2$. Then
	\begin{enumerate}
		\item the ring $\Bbbk[X]/\sqrt{P_2(X)}$ is $F$-pure;
		\item the ring $\Bbbk[X]/\sqrt{P_2(X)}$ is $F$-regular if and only if $m=n=2$.
	\end{enumerate}
\end{theorem}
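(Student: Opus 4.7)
\medskip

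\noindent\textbf{Proof proposal.} For part (1), the plan is to apply Fedder's criterion (Theorem~\ref{thm:Fedder}) with the element $f$ constructed just before Lemma~\ref{lem:element-F-pure-generic}. Since $\sqrt{P_2(X)} = \bigcap_{P} P$ where $P$ runs over the minimal primes of $P_2(X)$ (all of which are enumerated in Lemma~\ref{lem:generic-minimal-primes}), and since $f \in (P^{[p]} :_{\Bbbk[X]} P)$ for every such $P$ by Lemma~\ref{lem:element-F-pure-generic}, a repeated application of Lemma~\ref{lem:intersection} places $f$ in $\bigl(\sqrt{P_2(X)}^{[p]} :_{\Bbbk[X]} \sqrt{P_2(X)}\bigr)$. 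It then remains only to verify that $f \notin \mathfrak{m}^{[p]}$: every summand $f_\omega$ contains the factor $(a_\omega d_\omega)^{2p-2-k}$ with $k\leq p-2$, so that $a_\omega$ and $d_\omega$ appear with exponent at least $p$, showing $f_\omega \in \mathfrak{m}^{[p]}$; hence $f \equiv \prod_{i,j} x_{ij}^{p-1} \pmod{\mathfrak{m}^{[p]}}$, and this monomial is manifestly outside $\mathfrak{m}^{[p]}$ because every variable appears with exponent $p-1 < p$.

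For part (2), the plan is a direct case analysis based on Lemma~\ref{lem:generic-minimal-primes}. If $m = n = 2$, then the only $2\times 2$ submatrix of $X$ is $X$ itself, types (2) and (3) of Lemma~\ref{lem:generic-minimal-primes} do not apply, and $P_2(X) = (\perm X)$ is the single minimal prime; hence $P_2(X) = \sqrt{P_2(X)}$, and $\Bbbk[X]/\sqrt{P_2(X)}$ is $F$-regular by Theorem~\ref{thm:square-generic-symmetric-F-regular}. If instead $m \geq 3$ or $n \geq 3$, then Lemma~\ref{lem:generic-minimal-primes} exhibits at least two distinct minimal primes of $P_2(X)$ (for instance a type (1) prime attached to some $2\times 2$ submatrix together with a type (2) or type (3) prime), so $\sqrt{P_2(X)}$ is not prime, and $\Bbbk[X]/\sqrt{P_2(X)}$ is not a domain. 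Since $F$-regular rings are domains by Theorem~\ref{thm:properties-F}(3), this forces failure of $F$-regularity, completing the characterization.

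The substantive work is almost entirely packaged into the preceding lemmas, so I do not anticipate any serious obstacle in assembling the theorem itself; the main bookkeeping step is the monomial check that $f_\omega \in \mathfrak{m}^{[p]}$ for every $\omega \in \Omega$, which hinges on the inequality $2p-2-k \geq p$ that holds precisely because the summation index satisfies $k \leq p-2$. This is also the place where the hypothesis $p > 2$ is used implicitly through the structure of the polynomial $f$ (for $p = 2$ the inner sum would be empty).
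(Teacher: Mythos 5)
Your proof is correct and takes essentially the same route as the paper's: Fedder's criterion with the element $f$, combined with Lemmas~\ref{lem:element-F-pure-generic}, \ref{lem:intersection}, and~\ref{lem:generic-minimal-primes} for part~(1), and the minimal prime count plus Theorem~\ref{thm:square-generic-symmetric-F-regular} for part~(2). Your explicit check that $2p-2-k\geq p$, and hence $f_\omega\in\mathfrak{m}^{[p]}$, is the detail the paper elides with ``it is clear,'' though your closing parenthetical is slightly inaccurate (for $p=2$ the inner sum would have one term, not be empty); this aside is unused in the argument and harmless.
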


\begin{proof}
	If $(m,n)=(2,2)$, then     by Theorem~\ref{thm:square-generic-symmetric-F-regular}, $\Bbbk[X]/\sqrt{P_2(X)}= \Bbbk[X]/P_2(X)$ is $F$-regular, as desired. For the rest of the proof, we will assume that $(m,n)\neq (2,2)$.
	
	By Lemma~\ref{lem:generic-minimal-primes}, $\sqrt{P_2(X)}$ is not prime, and thus the corresponding ring $\Bbbk[X]/\sqrt{P_2(X)}$ is not $F$-regular by Theorem~\ref{thm:properties-F}. The statement (2) then follows.
    
	We will now prove (1). It is clear that  $f= \prod_{i,j=1}^n x_{ij}^{p-1}$ modulo $\mathfrak{m}^{[p]}$, where $\mathfrak{m}$ denotes the homogeneous maximal ideal of $\Bbbk[X]$, and hence $f\notin\mathfrak{m}^{[p]}$. By Lemma~\ref{lem:element-F-pure-generic} and Lemma~\ref{lem:intersection}, we have $f\in (P_2(X)^{[p]}:_{\Bbbk[X]} P_2(X))$. Thus statement (1) follows from Fedder's criterion (Theorem~\ref{thm:Fedder}).
\end{proof}

It is natural to ask whether $\Bbbk[X]/\sqrt{P_t(X)}$ is $F$-pure for $t>2$. Permanental ideals become much more complicated even for $t=3$, where we do not even have a complete list of their minimal primes (a partial list in this case can be found in \cite{Kirkup}). It is also not known in general when $P_t(X)$, or even $P_3(X)$, is radical. In a recent article, specifically \cite[Theorem~3.18]{BCMV25}, the authors showed that $P_t(X)$, where $X$ is a $t\times(t+1)$ generic matrix, defines a complete intersection for $t\in [2,4]$. Thus in these three cases, the question of $F$-purity is equivalent to an ideal membership problem. In the case $t=2$, we already have an answer in this section. Using Macaulay2, we propose the following conjecture on $P_3(X_{3\times 4})$, which we verified for $\characteristic \Bbbk\leq 31$, and leave $P_4(X_{4\times 5})$ for speculation.

\begin{conjecture}
    Let $X$ be a $3\times 4$ generic matrix of indeterminates, and assume that $\characteristic\Bbbk >2 $. Then $\Bbbk[X]/P_3(X)$ is $F$-pure if and only if $p\equiv 1 \text{ mod }6$.
\end{conjecture}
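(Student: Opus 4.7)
The plan is to invoke Fedder's criterion. By \cite[Theorem~3.18]{BCMV25}, the four cubic generators $f_j \coloneqq \perm(X^j)$ of $P_3(X)$---with $X^j$ the $3\times 3$ submatrix obtained by deleting the $j$-th column---form a regular sequence in $S = \Bbbk[X]$, so $P_3(X)$ is a complete intersection. The discussion following Theorem~\ref{thm:Fedder} then reduces $F$-purity of $S/P_3(X)$ to verifying that $\omega^{p-1} \notin \mathfrak{m}^{[p]}$, where $\omega = f_1 f_2 f_3 f_4$ and $\mathfrak{m} = (x_{ij})$ is the irrelevant maximal ideal.

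Next, a degree count pins down which monomial matters. Since $\deg(\omega^{p-1}) = 12(p-1)$ equals the number of variables of $S$ times $p-1$, any monomial of $\omega^{p-1}$ escaping $\mathfrak{m}^{[p]}$ must be exactly $\prod_{i,j} x_{ij}^{p-1}$. Hence the conjecture reduces to showing that the integer
\[
C_p \;\coloneqq\; \text{coefficient of } \prod_{i,j} x_{ij}^{p-1} \text{ in } (f_1 f_2 f_3 f_4)^{p-1}
\]
satisfies $C_p \not\equiv 0 \pmod p$ if and only if $p \equiv 1 \pmod 6$. Combinatorially, $C_p$ counts tuples of bijections $\sigma_{k,j}\colon \{1,2,3\} \to \{1,2,3,4\}\setminus\{j\}$, indexed by $(k,j) \in \{1,\dots,p-1\} \times \{1,2,3,4\}$, subject to the balance condition $|\{(k,j) : \sigma_{k,j}(i) = l\}| = p - 1$ for every $(i, l) \in \{1,2,3\}\times\{1,2,3,4\}$.

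To evaluate $C_p$ modulo $p$, I would use the generating-function identity $\perm(A) = [y_1 y_2 y_3] \prod_{i=1}^3 \sum_l a_{il} y_l$ to rewrite each $f_j$ as a coefficient extraction in auxiliary variables, then combine all four factors into a single multivariate residue. Alternatively, applying the Glynn expansion $\perm(A) = \tfrac{1}{4} \sum_{\delta_2, \delta_3 \in \{\pm 1\}} \delta_2 \delta_3 \prod_i(a_{i1} + \delta_2 a_{i2} + \delta_3 a_{i3})$---valid since $p > 2$---rewrites $\omega$ as a sum of $4^4$ products of twelve linear forms, after which the coefficient of the maximal monomial in each summand is itself a permanent accessible via Ryser-type formulas. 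Either way, Wilson's theorem $(p-1)! \equiv -1 \pmod p$ together with $\binom{p-1}{k} \equiv (-1)^k \pmod p$ should compress $C_p \pmod p$ into a short character sum.

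The main obstacle is this final evaluation. The appearance of the condition $p \equiv 1 \pmod 6$---rather than $p \equiv 1 \pmod 2$ or $p \equiv 1 \pmod 3$ alone---strongly suggests that a primitive cube root of unity $\zeta \in \overline{\Bbbk}$, arising from the cyclic $\mathbb{Z}/3$-action on the monomials of each $3\times 3$ permanent, must enter the argument. I would therefore extend scalars to $\Bbbk(\zeta)$, decompose each $f_j$ into $\mathbb{Z}/3$-isotypic components under cyclic row permutation, and aim to show that the character sum controlling $C_p$ vanishes modulo $p$ precisely when $\zeta \notin \Bbbk$, i.e., when $p \not\equiv 1 \pmod 3$. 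The difficulty is that $\omega$ is a product of \emph{four} such permanents, whose Fourier decomposition generates many cross-terms whose combined contribution modulo $p$ must be tracked; a closed-form simplification of $\omega^{p-1}$ modulo $\mathfrak{m}^{[p]}$, analogous to that in Lemma~\ref{lem:ideal-membership-square-Hankel}, would settle the conjecture, but identifying such an identity is where I expect the principal barrier to lie.
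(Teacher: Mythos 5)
This statement is a \emph{conjecture} in the paper, not a theorem: the author explicitly writes that it was ``verified for $\characteristic\Bbbk\leq 31$'' using Macaulay2, and no proof is given. There is therefore no paper proof against which to compare your argument. Your write-up is a proof \emph{plan}, not a proof, and you candidly say as much in the final sentence.

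That said, your reduction is exactly the one the paper has in mind. The citation of \cite[Theorem~3.18]{BCMV25} for the complete-intersection property of $P_3(X_{3\times 4})$, the consequent reduction via Fedder's criterion to the single condition $\omega^{p-1}\notin\mathfrak{m}^{[p]}$ with $\omega=f_1f_2f_3f_4$, and the degree count showing that only the monomial $\prod_{i,j}x_{ij}^{p-1}$ can survive modulo $\mathfrak{m}^{[p]}$ are all correct and are precisely what the paper alludes to when it says ``in these three cases, the question of $F$-purity is equivalent to an ideal membership problem.'' Your combinatorial reformulation of $C_p$ as a count of balanced tuples of injections is also sound.

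The genuine gap is the one you yourself flag: you offer several plausible machines (multivariate residues, Glynn's expansion, a $\mathbb{Z}/3$-Fourier decomposition) for evaluating $C_p\bmod p$, and you give a heuristic for why $p\equiv 1\pmod 3$ (cube roots of unity) and $p$ odd should both enter, but none of these is carried to completion. No closed form for $C_p\bmod p$ is produced, and no argument is given that $C_p\equiv 0\pmod p$ exactly when $p\not\equiv 1\pmod 6$. Be aware that such a computation is genuinely delicate: unlike the Hankel hypersurface case of Lemma~\ref{lem:ideal-membership-square-Hankel}, where a single permanent is raised to a power and a row-expansion induction collapses it, here four \emph{different} $3\times 3$ permanents interact multiplicatively, and their cross-terms do not collapse by any obvious recursion. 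Your proposal sets up the problem correctly but does not settle it, which is consistent with its status as an open conjecture.
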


\section{The $F$-singularities of algebras defined by $2\times 2$ permanents of symmetric matrices}\label{sec:sym}

In this section we will determine the $F$-singularities of the coordinate rings $\Bbbk[Y]/\sqrt{P_2(Y)}$ where $\Bbbk$ is a field of positive characteristic $p> 2$ and $Y$ is a $n\times n$ symmetric matrix of indeterminates where $n\geq 2$. Our proofs will mirror those in the last section. Thus we will use the same symbols for many notations. We are not to confuse the notations in this section with those in the previous section.

We start with a description of the minimal primes of the ideal $P_2(Y)$. For each pair of integers $1\leq u<v\leq n$, set $\omega_{uv}\coloneqq\begin{pmatrix}
    y_{uu}&y_{uv}\\
	y_{uv}&y_{vv}
\end{pmatrix}$, and let $P_{uv}$ denote the ideal generated by $\perm(\omega_{uv})$ and 
and all the entries of $Y$ outside of this submatrix. We then have the following.

\begin{lemma}[{\cite[Theorem~4.1]{Trung-symmetric}}]\label{lem:symmetric-minimal-primes}
    An ideal $P$ is a minimal prime of $P_2(Y)$ if and only if $P=P_{uv}$ for some $1\leq u<v\leq n$. 
\end{lemma}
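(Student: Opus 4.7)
The plan is to establish the irredundant prime decomposition
\[
\sqrt{P_2(Y)} \;=\; \bigcap_{1 \le u < v \le n} P_{uv},
\]
with the $P_{uv}$ pairwise incomparable; once this is in hand, the minimal primes of $P_2(Y)$ are precisely the $P_{uv}$.

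First, I would verify that each $P_{uv}$ is prime. The quotient $\Bbbk[Y]/P_{uv}$ is isomorphic to $\Bbbk[y_{uu}, y_{uv}, y_{vv}]/(y_{uu}y_{vv} + y_{uv}^2)$, and since $\characteristic \Bbbk \neq 2$ the quadratic form $y_{uu}y_{vv} + y_{uv}^2$ has full Hessian rank $3$, hence is irreducible, so the quotient is a domain. Next, for $P_2(Y) \subseteq P_{uv}$: the permanent of any $2 \times 2$ submatrix from rows $\{i_1,i_2\}$ and columns $\{j_1,j_2\}$, namely $y_{i_1j_1}y_{i_2j_2} + y_{i_1j_2}y_{i_2j_1}$, either collapses via the symmetry $y_{ij}=y_{ji}$ to $\perm(\omega_{uv})$ (exactly when $\{i_1,i_2,j_1,j_2\}\subseteq\{u,v\}$) or else has each of its two monomial summands containing a variable outside $\omega_{uv}$, hence lies in $P_{uv}$. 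Pairwise incomparability is immediate: if $(u,v)\neq(u',v')$ then $\{u',v'\}\not\subseteq\{u,v\}$, and for any $w \in \{u',v'\}\setminus\{u,v\}$ the variable $y_{ww}$ lies in $P_{uv}\setminus P_{u'v'}$.

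The heart of the argument is the containment $\bigcap_{u<v} P_{uv} \subseteq \sqrt{P_2(Y)}$. After passing to the algebraic closure, this amounts to showing $V(P_2(Y)) \subseteq \bigcup_{u<v} V(P_{uv})$, i.e., every symmetric matrix $M=(m_{ij})$ with all $2 \times 2$ permanents vanishing is supported inside a single $2 \times 2$ principal block $\{u,v\}\times\{u,v\}$ (necessarily with $m_{uu}m_{vv} + m_{uv}^2 = 0$). The key input is Lemma~\ref{lem:three-element-in-permanent} (and, when helpful, the symmetric analog of Lemma~\ref{lem:three-element-in-permanent-2}): for any two distinct rows and three distinct columns the corresponding cubic monomial lies in $P_2(Y)$, so the same product of entries of $M$ vanishes. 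Starting from the assumption that three distinct indices $a,b,c$ each support a nonzero entry of $M$, I would combine these cubic vanishings with the quadratic vanishings of individual $2 \times 2$ permanents and the relation $m_{ij}=m_{ji}$ to force a contradiction, concluding that at most two indices appear in the support of $M$.

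The main obstacle is this final combinatorial step. In the generic case (Lemma~\ref{lem:generic-minimal-primes}) additional ``row'' and ``column'' prime components appear; in the symmetric setting these families must collapse into the principal-block family, because a symmetric matrix supported on a single row is automatically supported on the corresponding column, forcing the support back into a $2 \times 2$ principal block. Handling this uniformly, rather than separately excluding row- and column-type components, requires a careful case analysis that exploits diagonal entries together with the symmetry $m_{ij}=m_{ji}$; this is the one place where brute enumeration becomes tempting but where leveraging the monomial containments of Section~\ref{sec:prem} gives a much cleaner argument.
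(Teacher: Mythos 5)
This lemma is cited from \cite[Theorem~4.1]{Trung-symmetric} and not reproved in the paper, so there is no in-paper argument to compare against; the assessment is of your proposal on its own merits. The overall framework you describe is the standard correct one: show each $P_{uv}$ is prime, that $P_2(Y)\subseteq P_{uv}$, that the $P_{uv}$ are pairwise incomparable, and that $\bigcap_{u<v}P_{uv}\subseteq\sqrt{P_2(Y)}$ via the Nullstellensatz over $\overline{\Bbbk}$. Your verifications of the first three steps (the rank-$3$ quadratic form for primality, using $\characteristic\Bbbk\neq 2$; the term-by-term membership of each $2\times 2$ permanent in $P_{uv}$; and incomparability via a variable $y_{ww}$) are all sound.

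The genuine gap is the final containment, which you only sketch (``I would combine these cubic vanishings \ldots to force a contradiction''). As written it is a plan, not an argument, and it is the entire substance of the lemma. The sketch does go through, but the decisive computation must be made explicit. Suppose $M$ is a symmetric matrix with all $2\times 2$ permanents zero. If $m_{ab}\neq 0$ for some $a\neq b$, then for any $j\notin\{a,b\}$ the positions $(j,a)$, $(a,j)$, $(a,b)$ occupy two rows $\{a,j\}$ and three columns $\{a,b,j\}$, so Lemma~\ref{lem:three-element-in-permanent} (which, as the paper notes, applies to arbitrary matrices, hence to $Y$) gives $y_{aj}^2 y_{ab}\in P_2(Y)$; thus $m_{aj}^2 m_{ab}=0$, forcing $m_{aj}=0$, and symmetrically $m_{bj}=0$. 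Then for any $i,j\notin\{a,b\}$ the permanent on rows $\{a,i\}$ and columns $\{b,j\}$ collapses to $m_{ab}m_{ij}$, so $m_{ij}=0$; hence the support of $M$ lies in $\{a,b\}\times\{a,b\}$ and $M\in V(P_{ab})$. If instead every off-diagonal entry vanishes, the principal $2\times 2$ permanents give $m_{ii}m_{jj}=0$ for all $i\neq j$, so at most one diagonal entry survives and again $M$ lies in some $V(P_{uv})$. Until an argument of this kind is written out, the proposal is an outline rather than a proof.
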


Next we identify the magic element $f$ needed to use the Fedder's criterion.  We will also use $\omega_{uv}$ to denote the set $\{y_{uu}, y_{uv}, y_{vv}\}$. Set
\begin{align*}
    \!\begin{multlined}[t][.3\displaywidth]
	f\coloneqq (-1)^{\frac{p-1}{2}}  \prod_{1\leq i\leq j\leq n} y_{ij}^{p-1} + \sum_{1\leq i<j\leq n}\left( \left( \prod_{y\notin \omega_{ij}  } y^{p-1} \right) \sum_{k=0}^{\frac{p-3}{2}} (-1)^k (y_{ii}y_{jj})^{\frac{3(p-1)}{2}-k}(y_{ij})^{2k}\right)\\
	+ \sum_{1\leq i<j\leq n}\left( \left( \prod_{y\notin \omega_{ij}  } 
    y^{p-1} \right) \sum_{k=\frac{p+1}{2}}^{p-1} (-1)^k (y_{ii}y_{jj})^{\frac{3(p-1)}{2}-k}(y_{ij})^{2k}\right).
    \end{multlined}
\end{align*}

\begin{lemma}\label{lem:element-F-pure-symmetric}
    For any $1\leq u<v\leq n$, we have $f\in (P_{uv}^{[p]}:_{\Bbbk[Y]} P_{uv})$.
\end{lemma}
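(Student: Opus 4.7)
The plan is to mirror the proof of Lemma~\ref{lem:element-F-pure-generic}. The generators of $P_{uv}$, namely $\perm(\omega_{uv}) = y_{uu}y_{vv} + y_{uv}^2$ together with all entries of $Y$ outside $\omega_{uv}$, form a regular sequence on $\Bbbk[Y]$, so
\[
(P_{uv}^{[p]} :_{\Bbbk[Y]} P_{uv}) = \left((y_{uu}y_{vv} + y_{uv}^2)^{p-1}\prod_{y \notin \omega_{uv}} y^{p-1}\right) + P_{uv}^{[p]}.
\]
I would decompose $f = g + \sum_{1\leq i<j\leq n} f_{ij}$, where $g = (-1)^{(p-1)/2}\prod_{1\leq i\leq j\leq n} y_{ij}^{p-1}$ and $f_{ij}$ denotes the $\omega_{ij}$-indexed summand in the definition of $f$. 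The strategy is to show that $g + f_{uv}$ produces the principal generator of the colon ideal above, while every other $f_{ij}$ lies in $P_{uv}^{[p]}$.

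For the first piece, after factoring out $\left(\prod_{y\notin \omega_{uv}} y^{p-1}\right)(y_{uu}y_{vv})^{(p-1)/2}$, the two sub-sums defining $f_{uv}$ cover all indices $k \in \{0,1,\dots,p-1\}$ except $k = (p-1)/2$, and $g$ is precisely the missing $k = (p-1)/2$ term: the sign $(-1)^{(p-1)/2}$ in front of $g$ matches $(-1)^k$ at that index, and the exponents match since $3(p-1)/2 - (p-1)/2 = p-1 = 2\cdot(p-1)/2$. Thus
\[
g + f_{uv} = \left(\prod_{y\notin \omega_{uv}} y^{p-1}\right)(y_{uu}y_{vv})^{(p-1)/2}\sum_{k=0}^{p-1}(-1)^k (y_{uu}y_{vv})^{p-1-k} y_{uv}^{2k},
\]
and the classical identity $\binom{p-1}{k} \equiv (-1)^k \pmod{p}$ together with the binomial theorem collapses the sum to $(y_{uu}y_{vv} + y_{uv}^2)^{p-1}$, placing $g + f_{uv}$ inside the colon ideal.

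For the remaining pieces $f_{ij}$ with $(i,j)\neq(u,v)$, I would show that each monomial of $f_{ij}$ is divisible by $y^p$ for some $y \notin \omega_{uv}$, which suffices since the monomial generators of $P_{uv}^{[p]}$ are precisely $\{y^p : y \notin \omega_{uv}\}$ together with $(y_{uu}y_{vv}+y_{uv}^2)^p$. First, since $y_{ij}$ is an off-diagonal entry and $\{i,j\} \neq \{u,v\}$, we have $y_{ij} \notin \omega_{uv}$. Second, since $y_{uv}$ is the only off-diagonal entry in $\omega_{uv}$, at least one of the diagonal entries $y_{ii}, y_{jj}$ must lie outside $\omega_{uv}$. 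In the first sub-sum $(k \leq (p-3)/2)$, the exponent $3(p-1)/2 - k$ of each of $y_{ii}$ and $y_{jj}$ is $\geq p$, so each monomial contains $y_{ii}^p$ and $y_{jj}^p$ and hence lies in $P_{uv}^{[p]}$. In the second sub-sum $(k \geq (p+1)/2)$, the exponent $2k$ of $y_{ij}$ is $\geq p+1$, so each monomial contains $y_{ij}^p$ and again lies in $P_{uv}^{[p]}$. Combining the two cases yields $f \in (P_{uv}^{[p]} :_{\Bbbk[Y]} P_{uv})$.

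The main bookkeeping hurdle is verifying the sign and exponent matching when combining $g$ with $f_{uv}$ into a complete binomial sum of length $p$; once that identification is pinned down, the remaining arguments reduce to the exponent comparisons already used in the generic case.
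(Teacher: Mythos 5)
Your proposal is correct and follows essentially the same route as the paper: the identical decomposition $f = g + \sum_{i<j} f_{ij}$, the same reassembly of $g + f_{uv}$ into a full binomial sum of length $p$ via $\binom{p-1}{k}\equiv(-1)^k \pmod p$, and the same exponent comparisons showing each $f_{ij}$ with $(i,j)\neq(u,v)$ lies in $P_{uv}^{[p]}$. You are slightly more explicit than the paper in justifying why at least one of $y_{ii},y_{jj}$ falls outside $\omega_{uv}$, but the argument is otherwise the same.
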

\begin{proof}
    Set
	\begin{align*}
		g&\coloneqq (-1)^{\frac{p-1}{2}} \prod_{1\leq i\leq j\leq n} y_{ij}^{p-1}, \text{ and} \\
		f_{ij}&\coloneqq \!\begin{multlined}[t][.3\displaywidth]
			\left( \prod_{y\notin \omega_{ij} } y^{p-1} \right) \sum_{k=0}^{\frac{p-3}{2}} (-1)^k (y_{ii}y_{jj})^{\frac{3(p-1)}{2}-k}(y_{ij})^{2k}
			\\+ \left( \prod_{y\notin \omega_{ij} } y^{p-1} \right) \sum_{k=\frac{p+1}{2}}^{p-1} (-1)^k (y_{ii}y_{jj})^{\frac{3(p-1)}{2}-k}(y_{ij})^{2k},
		\end{multlined}
	\end{align*}
    for each pair of integers $1\leq i<j\leq n$. Then $f=g+\sum_{1\leq i<j\leq n} f_{ij}$.	 We recall the generators of $P_{uv}$:
	\[
	P_{uv}=(y_{uu}y_{vv}+y_{uv}^2)+(y \mid y\notin \omega_{uv}).
	\] 
	It is clear that these generators of $P_{uv}$ form a regular sequence on $\Bbbk[Y]$. Hence
		 \begin{align*}
			(P_{uv}^{[p]}:_{\Bbbk[Y]} P_{uv})&= (y_{uu}y_{vv}+y_{uv}^2)^{p-1}\prod_{ y\notin \omega_{uv} } y^{p-1} + P_{uv}^{[p]}\\
			&=\!\begin{multlined}[t][.3\displaywidth]
				\left( (y_{uu}y_{vv}+y_{uv}^2)^{p-1}\prod_{ y\notin \omega_{uv}} y^{p-1}   \right)+ \left((y_{uu}y_{vv}+y_{uv}^2)^p\right) +\left(y^p\mid y\notin \omega_{uv}\right).
			\end{multlined}
		\end{align*} 
		Thus the element
		\begin{align*}
			g+f_{uv}&=(-1)^{\frac{p-1}{2}} \prod_{1\leq i\leq j\leq n} y_{ij}^{p-1} + \!\begin{multlined}[t][.3\displaywidth]
				\left( \prod_{y\notin \omega_{uv}  } y^{p-1} \right) \sum_{k=0}^{\frac{p-3}{2}} (-1)^k (y_{uu}y_{vv})^{\frac{3(p-1)}{2}-k}(y_{uv})^{2k}
				\\+ \left( \prod_{y\notin \omega_{uv} } y^{p-1} \right) \sum_{k=\frac{p+1}{2}}^{p-1} (-1)^k (y_{uu}y_{vv})^{\frac{3(p-1)}{2}-k}(y_{uv})^{2k}
			\end{multlined}\\
			&=\left( \prod_{y\notin \omega_{uv}  } y^{p-1} \right) \sum_{k=0}^{p-1} (-1)^k (y_{uu}y_{vv})^{\frac{3(p-1)}{2}-k}(y_{uv})^{2k}\\
			&=\left( \prod_{y\notin \omega_{uv}} y^{p-1} \right) (y_{uu}y_{vv})^{\frac{p-1}{2}} \sum_{k=0}^{p-1} \binom{p-1}{k} (y_{uu}y_{vv})^{p-1-k}(y_{uv})^{2k}
			\\
			&= (y_{uu}y_{vv})^{\frac{p-1}{2}} \left( \prod_{y\notin \omega_{uv}} y^{p-1} \right) (y_{uu}y_{vv}+y_{uv}^2)^{p-1}
		\end{align*}
		is in $(P_{uv}^{[p]}:_{\Bbbk[Y]} P_{uv})$. On the other hand, consider any pair of integers $1\leq i<j\leq n$ such that $(i,j)\neq (u,v)$. Then both $(y_{ii}y_{jj})^p$ and $y_{ij}^p$ are in $(P_{uv}^{[p]}:_{\Bbbk[X]} P_{uv})$. Moreover, each monomial summand of the polynomial
		\[
		f_{ij}=\!\begin{multlined}[t][.3\displaywidth]
			\left( \prod_{y\notin \omega_{ij}  } y^{p-1} \right) \sum_{k=0}^{\frac{p-3}{2}} (-1)^k (y_{ii}y_{jj})^{\frac{3(p-1)}{2}-k}(y_{ij})^{2k}\\ + \left( \prod_{y\notin \omega_{ij} } y^{p-1} \right) \sum_{k=\frac{p+1}{2}}^{p-1} (-1)^k (y_{ii}y_{jj})^{\frac{3(p-1)}{2}-k}(y_{ij})^{2k}
		\end{multlined}
		\]
		contains either $(y_{ii}y_{jj})^p$ or $y_{ij}^p$. Thus $f_{ij}$ is in $(P_{uv}^{[p]}:_{\Bbbk[Y]} P_{uv})$. Therefore
		\[
		f=g+\sum_{1\leq i< j\leq n} f_{ij}
		\]
		is in $(P_{uv}^{[p]}:_{\Bbbk[Y]} P_{uv})$, as desired.
\end{proof}

We are now ready to prove the main result of this section.

\begin{theorem}\label{thm:symmetric-Fpure}
    Let $\Bbbk$ be a field of positive characteristic $p>2$ and $Y$ an  $n\times n$  symmetric matrix of indeterminates where $n\geq 2$. Then
	\begin{enumerate}
		\item the ring $\Bbbk[Y]/\sqrt{P_2(Y)}$ is $F$-pure;
		\item the ring $\Bbbk[Y]/\sqrt{P_2(Y)}$ is $F$-regular if and only if $n=2$.
	\end{enumerate}
\end{theorem}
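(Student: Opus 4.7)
The plan is to mirror the strategy of Theorem~\ref{thm:generic-Fpure} step for step, relying on the ingredients already assembled for the symmetric case. The case $n=2$ is dispatched immediately: Lemma~\ref{lem:symmetric-minimal-primes} then yields a single minimal prime $P_{12}=(\perm(Y))$, so $\sqrt{P_2(Y)} = P_2(Y)$ and both statements follow at once from Theorem~\ref{thm:square-generic-symmetric-F-regular}. For $n \geq 3$, statement (2) is equally quick: Lemma~\ref{lem:symmetric-minimal-primes} produces $\binom{n}{2} \geq 3$ distinct minimal primes of $P_2(Y)$, so $\Bbbk[Y]/\sqrt{P_2(Y)}$ is not a domain and fails to be $F$-regular by Theorem~\ref{thm:properties-F}(3).

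For statement (1) when $n \geq 3$, the approach is to apply Fedder's criterion (Theorem~\ref{thm:Fedder}) to the explicit element $f$ defined just before Lemma~\ref{lem:element-F-pure-symmetric}. That lemma already places $f \in (P_{uv}^{[p]} :_{\Bbbk[Y]} P_{uv})$ for every pair $1 \leq u < v \leq n$. Since $\sqrt{P_2(Y)} = \bigcap_{u<v} P_{uv}$, a short induction on the (finite) set of minimal primes using Lemma~\ref{lem:intersection} upgrades this to $f \in (\sqrt{P_2(Y)}^{[p]} :_{\Bbbk[Y]} \sqrt{P_2(Y)})$.

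What remains is to verify $f \notin \mathfrak{m}^{[p]}$, where $\mathfrak{m}$ is the homogeneous maximal ideal of $\Bbbk[Y]$. This is a routine inspection of the two double sums comprising $f$. In the sum ranging over $k \in [0, (p-3)/2]$ the exponent $\tfrac{3(p-1)}{2}-k$ is at least $p$, so the factor $(y_{ii}y_{jj})^{\frac{3(p-1)}{2}-k}$ is divisible by $y_{ii}^p$; in the sum ranging over $k \in [(p+1)/2, p-1]$ we have $2k \geq p+1$, so $y_{ij}^{2k}$ is divisible by $y_{ij}^p$. Hence every summand in both double sums lies in $\mathfrak{m}^{[p]}$, and modulo $\mathfrak{m}^{[p]}$ one is left with $(-1)^{(p-1)/2}\prod_{1 \leq i \leq j \leq n} y_{ij}^{p-1}$, a monomial with every exponent strictly less than $p$ and therefore visibly outside $\mathfrak{m}^{[p]}$.

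The only substantive obstacle was producing the magic element $f$ that simultaneously lies in all the colon ideals $(P_{uv}^{[p]} :_{\Bbbk[Y]} P_{uv})$ while surviving modulo $\mathfrak{m}^{[p]}$, and that work has been folded into Lemma~\ref{lem:element-F-pure-symmetric}. Granted that lemma together with Lemma~\ref{lem:intersection}, the theorem reduces to the bookkeeping sketched above.
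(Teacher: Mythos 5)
Your proposal is correct and follows exactly the paper's route: the $n=2$ case via Theorem~\ref{thm:square-generic-symmetric-F-regular}, non-$F$-regularity for $n\geq 3$ from the multiplicity of minimal primes, and $F$-purity via Fedder's criterion applied to the element $f$, using Lemma~\ref{lem:element-F-pure-symmetric} together with Lemma~\ref{lem:intersection}. Your explicit check that every summand of the two interior sums lands in $\mathfrak{m}^{[p]}$ (since $\tfrac{3(p-1)}{2}-k\geq p$ in the first range and $2k\geq p+1$ in the second) is a correct and slightly more detailed version of the paper's remark that $f\equiv(-1)^{(p-1)/2}\prod_{1\leq i\leq j\leq n} y_{ij}^{p-1}$ modulo $\mathfrak{m}^{[p]}$ for degree reasons.
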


\begin{proof}
    If $n=2$, then    by Theorem~\ref{thm:square-generic-symmetric-F-regular}, $\Bbbk[Y]/\sqrt{P_2(Y)}= \Bbbk[Y]/P_2(Y)$ is $F$-regular, as desired. For the rest of the proof, we will assume that $n\neq 2$.

    By Lemma~\ref{lem:symmetric-minimal-primes}, $\sqrt{P_2(Y)}$ is not prime, and thus the corresponding ring $\Bbbk[Y]/\sqrt{P_2(Y)}$ is not $F$-regular by Theorem~\ref{thm:properties-F}. The statement (2) then follows.

    We will now prove (1). Let $\mathfrak{m}$ denote the homogeneous maximal ideal of $\Bbbk[Y]$. For degree reasons, we have  $f= \prod_{1\leq i<j\leq n} y_{ij}^{p-1}$ modulo $\mathfrak{m}^{[p]}$, and hence in particular, we have $f\notin\mathfrak{m}^{[p]}$. By Lemma~\ref{lem:element-F-pure-symmetric} and Lemma~\ref{lem:intersection}, we have $f\in (P_2(Y)^{[p]}:_{\Bbbk[Y]} P_2(Y))$. Thus statement (1) follows from Fedder's criterion (Theorem~\ref{thm:Fedder}).
\end{proof}

We end the section with a natural question of whether these results can be generalized.

\begin{question}
    Let $\Bbbk$ be a field of positive characteristic $p>2$ and $Y$ an  $n\times n$  symmetric matrix of indeterminates. For which $p$ and $n$ is the ring $\Bbbk[Y]/\sqrt{P_t(Y)}$ $F$-pure?
\end{question}

\section*{Acknowledgement} This paper is an extension on the author's Ph.D thesis \cite{TC}. The author was supported by the NSF grants DMS 1801285, 2101671, and 2001368, and the Infosys Foundation. The author would like to thank his advisor Anurag K. Singh for suggesting this problem and for the constant encouragement and helpful discussions.  The author would like to thank Vaibhav Pandey, Irena Swanson, and Uli Walther for many helpful suggestions. Part of this work was done while the author visited Purdue University. The author would like to thank the Department of Mathematics at Purdue, and especially Annie and Bill Giokas, for their hospitality. The author would like to thank the referees for their careful reading and helpful feedback.

\subsection*{Data availability statement} Data sharing does not apply to this article as no new data were created or
analyzed in this study.

\subsection*{Conflict of interest} The authors declare that they have no known competing financial interests or personal
relationships that could have appeared to influence the work reported in this paper.

\bibliographystyle{amsplain}
\bibliography{refs}

\providecommand{\bysame}{\leavevmode\hbox to3em{\hrulefill}\thinspace}
\providecommand{\MR}{\relax\ifhmode\unskip\space\fi MR }
\providecommand{\MRhref}[2]{%
  \href{http://www.ams.org/mathscinet-getitem?mr=#1}{#2}
}
\providecommand{\href}[2]{#2}
\begin{thebibliography}{10}

\bibitem{Aaronson:14}
Scott Aaronson and Alex Arkhipov, \emph{The computational complexity of linear optics}, Research in Optical Sciences, Optica Publishing Group, 2014, p.~QTh1A.2.

\bibitem{BCMV25}
Ada Boralevi, Enrico Carlini, Mateusz Micha{\l}ek, and Emanuele Ventura, \emph{On the codimension of permanental varieties}, Adv. Math. \textbf{461} (2025), Paper No. 110079, 28.

\bibitem{Bou87}
Jean-Fran{\c c}ois Boutot, \emph{Singularit\'es rationnelles et quotients par les groupes r\'eductifs}, Invent. Math. \textbf{88} (1987), no.~1, 65--68.

\bibitem{BCRV}
Winfried Bruns, Aldo Conca, Claudiu Raicu, and Matteo Varbaro, \emph{Determinants, {G}r{\"o}bner bases and cohomology}, Springer Monographs in Mathematics, Springer Cham, 2022.

\bibitem{BV}
Winfried Bruns and Udo Vetter, \emph{Determinantal rings}, Springer Lecture Notes in Math 1327, Springer Verlag, New York, 1988.

\bibitem{TC}
Trung Chau, \emph{Barile-{M}acchia resolutions of monomial ideals and {$F$}-singularities of rings defined by commutator matrices and permanents}, Ph.D thesis, University of Utah (2024).

\bibitem{TC-Commutator}
Trung Chau, \emph{The {$F$}-regularity of algebraic sets related to commutator matrices}, J. Algebra \textbf{652} (2024), 52--66.

\bibitem{Trung-symmetric}
Trung Chau, \emph{Permanental ideals of symmetric matrices}, arXiv:2505.03367 [math.AC] (2025).

\bibitem{CONCA2018111}
Aldo Conca, Maral Mostafazadehfard, Anurag~K. Singh, and Matteo Varbaro, \emph{Hankel determinantal rings have rational singularities}, Advances in Mathematics \textbf{335} (2018), 111--129.

\bibitem{CSV24}
Aldo Conca, Anurag~K. Singh, and Matteo Varbaro, \emph{Invariant rings of the special orthogonal group have nonunimodal h-vectors}, Journal of Algebra and Its Applications \textbf{0} (0), no.~0, 2540001.

\bibitem{DummitFoote}
David~S. Dummitt and Richard~M. Foote, \emph{Abstract algebra}, 3rd edition, John Wiley Sons, Inc., 2004.

\bibitem{Eagon1971InvariantTA}
John~A. Eagon and Melvin Hochster, \emph{Invariant theory and the generic perfection of determinantal loci}, 1971.

\bibitem{ELSW18}
Klim Efremenko, Joseph~M. Landsberg, Hal Schenck, and Jerzy Weyman, \emph{On minimal free resolutions of sub-permanents and other ideals arising in complexity theory}, J. Algebra \textbf{503} (2018), 8--20.

\bibitem{Eisenbud}
David Eisenbud, \emph{Commutative algebra with a view toward algebraic geometry}, Graduate Texts in Mathematics, Springer New York, NY, 1995.

\bibitem{Fe83}
Richard Fedder, \emph{{F}-purity and rational singularity}, Trans. Amer. Math. Soc. \textbf{278} (1983), 461--480.

\bibitem{Gl96}
Donna Glassbrenner, \emph{Strong {F}-regularity in images of regular rings}, Proc. Amer. Math. Soc. \textbf{124} (1996), 345--353.

\bibitem{GG06}
Elena Grieco and Anna Guerrieri, \emph{Minimal primes over {$P_3(M)$}}, Rend. Istit. Mat. Univ. Trieste \textbf{39} (2007), 217--229.

\bibitem{GGS07}
Elena Grieco, Anna Guerrieri, and Irena Swanson, \emph{Permanental ideals of {H}ankel matrices}, Abh. Math. Sem. Univ. Hamburg \textbf{77} (2007), 39--58.

\bibitem{Ho07}
Melvin Hochster, \emph{Foundations of tight closure theory}, \url{http://www.math.lsa.umich.edu/~hochster/711F07/fndtc.pdf} (2007).

\bibitem{HH-BrianconSkoda}
Melvin Hochster and Craig Huneke, \emph{Tight closure, invariant theory, and the {B}rian\c con-{S}koda theorem.}, J. Amer. Math. Soc. \textbf{3} (1990), no.~1, 31--116.

\bibitem{HH94a}
\bysame, \emph{{$F$}-regularity, test elements, and smooth base change}, Trans. Amer. Math. Soc. \textbf{346} (1994), 1--62.

\bibitem{HH94b}
\bysame, \emph{Tight closure of parameter ideals and splitting in module-finite extensions}, J. Algebraic Geom. \textbf{3} (1994), 599--670.

\bibitem{Kirkup}
George~A. Kirkup, \emph{Minimal primes over permanental ideals}, Trans. Amer. Math. Soc. \textbf{360} (2008), no.~7, 3751--3770.

\bibitem{Kutz74}
Ronald~E. Kutz, \emph{Cohen-{M}acaulay rings and ideal theory in rings of invariants of algebraic groups}, Trans. Amer. Math. Soc. \textbf{194} (1974), 115--129.

\bibitem{LS00}
Reinhard~C. Laubenbacher and Irena Swanson, \emph{Permanental ideals}, J. Symbolic Comput. \textbf{30} (2000), 195--205.

\bibitem{LS99}
Gennady Lyubeznik and Karen~E. Smith, \emph{Strong and weak {F}-regularity are equivalent for graded rings}, Amer. J. Math. \textbf{121} (1999), 1279--1290.

\bibitem{PermanentProblem}
William McCuaig, \emph{P\'olya's permanent problem}, Electron. J. Combin. \textbf{11} (2004), no.~1, Research Paper 79, 83.

\bibitem{Minc}
Henryk Minc, \emph{Permanents}, Encyclopedia of Mathematics and its applications, vol. 6, Addison-Wesley, Reading, MA, 1978.

\bibitem{PTW23}
Vaibhav Pandey, Yevgeniya Tarasova, and Uli Walther, \emph{On the natural nullcones of the symplectic and general linear groups}, J. Lond. Math. Soc. (2) \textbf{111} (2025), no.~3, Paper No. e70078, 31.

\bibitem{Si99}
Anurag~K. Singh, \emph{F-regularity does not deform}, Amer. J. Math. \textbf{121} (1999), 919--929.

\bibitem{GIAN}
\bysame, \emph{Positive characteristic methods in commutative algebra}, \url{https://www.math.utah.edu/~singh/publications/singh_iitb.pdf} (2017).

\bibitem{Sm97}
Karen~E. Smith, \emph{F-rational rings have rational singularities}, Amer. J. Math. \textbf{119} (1997), 159--180.

\bibitem{VALIANT1979189}
Leslie~G. Valiant, \emph{The complexity of computing the permanent}, Theoretical Computer Science \textbf{8} (1979), no.~2, 189--201.

\end{thebibliography}
\end{document}